\newtheorem{theorem}{Theorem}[section]
\newtheorem{lemma}{Lemma}[section]
\numberwithin{equation}{section}
\newcommand{\bc}{\begin{center}}
\newcommand{\ec}{\end{center}}
\newcommand{\be}{\begin{eqnarray}}
\newcommand{\ee}{\end{eqnarray}}
\newcommand{\bea}{\begin{eqnarray*}}
\newcommand{\eea}{\end{eqnarray*}}
\newcommand{\disp}{\displaystyle}
\begin{document}

\title[]{Fractional Brownian Markets with Time-Varying Volatility and High-Frequency Data}
\author[]{A\lowercase{nanya} L\lowercase{ahiri}$^{1,\dag}$ \lowercase{and} R\lowercase{ituparna} S\lowercase{en}$^{2}$\\
$^{1}$C\lowercase{hennai} M\lowercase{athematical} I\lowercase{nstitute}, C\lowercase{hennai}, I\lowercase{ndia}\\
$^{2}$ I\lowercase{ndian} S\lowercase{tatistical} I\lowercase{nstitute},
  C\lowercase{hennai}, I\lowercase{ndia}\\}
\thanks{\dag{\it Corresponding author}, {E-mail: \tt ananya.isi@gmail.com}}

\begin{abstract}
Diffusion processes driven by Fractional Brownian motion (FBM) have often been considered in modeling stock price dynamics in order to capture the long range dependence of stock price observed in reality. Option prices for such models had been obtained by Necula (2002) under constant drift and volatility. We obtain option prices under time varying volatility model. The expression depends on volatility and the Hurst parameter in a complicated manner. We derive a central limit theorem for the quadratic variation as an estimator for volatility for both the cases, constant as well as time varying volatility. That will help us to find estimators of the option prices and to find their asymptotic distributions.

Keywords: asymptotic normality, fractional Black Scholes model, Malliavin calculus, option price, volatility, Wick financing, Wick Ito Skorohod integration, Wiener chaos.

\end{abstract}
\maketitle
\section{\bf Introduction}
It has been proposed to model stock prices as a diffusion driven by  fractional Brownian motion (fBm) in order to capture long range dependence of stock price in reality. See Cont (2005) for evidence of long memory in finance and relation to fractional Brownian motion.

Cheridito (2003) has shown that the solution of the diffusion equation driven by fBm with suitably time lag will lead to an arbitrage-free model. Guasoni (2006) has shown no arbitrage under transaction cost for fBm model. Elliott and Van der Hoek (2003), Biagini et al. (2004) have shown under Wick Ito Skorohod notion of integration one can get arbitrage free market with fBm in some sense. Option prices for such models are obtained by Necula (2002) under constant drift and volatility. One of the aims of this paper is to obtain estimator for some functional of volatility which can be used to price option under time varying volatility model.

For Brownian motion (Bm) setup, the estimation of one of the important functional of volatility appeared in option price formula, called integrated volatility, is performed using sum of frequently sampled squared data. For high frequency data with equal interval this estimator is essentially quadratic variation. FBm is long memory process for Hurst parameter $H\in (\frac{1}{2},1)$. It is well established result that for pure fBm with $H\in (\frac{1}{2},\frac{3}{4})$ quadratic variation is asymptotically normal. Using that result we will show that in the diffusion driven by fBm with $H<\frac{3}{4}$ with constant volatility, quadratic variation is 
asymptotically normal. For similar model and low frequency data with fixed time gap asymptotic normality for volatility estimator was obtained by Xiao et al.(2013). In our paper we consider the estimator for high frequency data with time intervals decreasing to zero and show the asymptotic normality for the estimator. Confidence intervals for volatility can now be translated to confidence intervals for option prices as the expression for option price involve the quantity volatility. For diffusion driven by fBm with $H\in (\frac{1}{2},\frac{3}{4})$ and time varying bounded volatility case also we will show the asymptotic normality of the  estimator from high frequency data. 

The objective of this paper is two fold. Firstly for the diffusion driven by fBm with time varying volatility we will find the option price in terms of some functional of volatility and the Hurst parameter. Secondly we will show the asymptotic normality property for the estimator for such parametric function. The estimator requires the prior knowledge of Hurst parameter. Once the estimate of functional of volatility is found one can apply the estimate to get the option price. 

The rest of the paper is organized as follows. In section \ref{sec2} we describe the diffusion model for the stock price. In section \ref{sec3} we introduce the proper notion of integration required to obtain an arbitrage-free solution of the diffusion equation.  In section \ref{sec4} we present the option pricing results. The central limit theorems for the proposed estimator are obtained in section \ref{sec5}.  We conclude and summarize the current and future research directions in section \ref{sec6}.

\section{\bf Model}\label{sec2}

The introduction of the fractional Black-Scholes model, where the
Bm in the classical Black-Scholes model is replaced by
a fBm, have been motivated by empirical studies (see for example
Mandelbrot (1997), Shiryaev (1999)).
The risk free asset equation is 
\begin{equation}dP_t=r_tP_tdt,\quad P_0=1, \quad 0\le t\le T\label{eqn01}
\end{equation}
The risky asset equation is 
\begin{equation}dS_t=\mu_tS_tdt+\sigma_tS_tdB^H_t,\quad S_0=S>0, \quad 0\le t\le T\label{eqn02}
\end{equation}
where $B^H_t$ is FBM with initial condition $S_0 = S>0.$ Here $H$ is Hurst parameter, for $0 < H < 1$. $\mu_t$ is real valued deterministic function of time $t$, called drift and $\sigma^2_t$ is positive real valued deterministic function of $t$, called volatility. $B^H_t$ is a continuous and centered
Gaussian process starting at 0 with covariance and variance
functions as follows: $\forall H \in (0, 1), s,t >0$
\begin{equation}\mathrm{E}(B^H_t B^H_s) =\frac{1}{2}(t^{2H} + s^{2H} - \mid t - s\mid^{2H})
\end{equation}
\begin{equation} E(B^H_t)^2 = t^{2H},  
\end{equation}
$B^H_t$ has homogeneous increments, i.e., $B^H_{t+s} -B^H_s$ has same law as $B^H_t$ for all $s, t > 0.$
Increments are dependent and correlation between the increments $B^H_{t+h} - B^H_t$ and $B^H_{s+h} - B^H_s$ with $s + h \le t$ and $t - s = nh$ is as follows:
\begin{equation}\rho_n^H =\frac{1}{2}h^{2H}[(n + 1)^{2H} + (n - 1)^{2H} - 2n^{2H}]
\end{equation}

Firstly we will find the European call option price for this model. Secondly we try to provide an estimator for option price. In this process we see that it is enough to study the quadratic variation of this process for given high frequency data $0=t_0<t_1<\cdots<t_N=1$ with $S_{t_j}, \ j=0,\cdots, N$ and $t_{j+1}-t_{j}=\disp\frac{1}{N}\ \forall \ j=0,\cdots, N-1$. We will propose suitable estimator with this data and see where it converges and how that is useful for estimating option price. We observe that the analysis is based on high frequency data, as sample size increases the time difference between two consecutive data point decreases. We also note that through out our analysis we know $H$, we do not estimate $H$ from data.
\section{\bf Regarding the solution of the SDE, Wick Ito Skorohod Integral, $H\in(0,1)$}\label{sec3}

In order to find the solution of the diffusion equation (\ref{eqn02}), we need to note that the whole
analysis depends on how we interpret the term $dB^H_t$. For $H \ne 1/2 , B^H_t$ is not a semimartingale.

There are different notions of integration with respect to $B^H_t$,  out of which
we choose Wick Ito Skorohod (WIS)
integral notion to solve equation (\ref{eqn02}), for $H\in (0,1)$, due to
financial reason outlined in, for example, see Elliot and Van der Hoek (2003) and Biagini $et.al.$ (2004),(2008).

Rogers (1997) explains why other common notions of integral are inappropriate.  

Following the WIS notion of integration the solution of the stochastic differential equation (\ref{eqn02}) is
\begin{equation}S_t = S_0 \exp\left(\int^t_0\sigma_sdB^H_s + \int^t_0\mu_sds -\frac{1}{2} \int_\mathbb{R}(M(\sigma_s\chi_{[0,t]}))^2ds\right) \label{eqn4}
\end{equation}
where $M$ is an operator acting on $s$ and depends on $H$;
$\chi_{[0,t]}$ is the indicator function. See Biagini et al. (2004, 2008). We will discuss about the operator $M$ in next subsection and meaning of $\int^t_0\sigma_sdB^H_s $ in following subsection.
\subsection{\bf The integral operator $M$ }

Let us elaborate about the operator $M$ which will be needed in future sections, See Biagini et al. (2004). Let $\mathcal{S}(\mathbb{R})$ denote the Schwartz space of smooth rapidly decreasing functions on $\mathbb{R}$.
 $M$ is defined on $\mathcal{S}(\mathbb{R})$ to $L^2(\mathbb{R})$ as follows:
\begin{equation}
Mf(x)=-\frac{d}{dx}C_H\int_{\mathbb{R}}(t-x)|t-x|^{H-\frac{3}{2}}f(t)dt
\end{equation}
$C_H$ is constant.
\begin{equation}
{Mf}(y)=|y|^{\frac{1}{2}-H} \hat{f}(y), y \in \mathbb{R}
\end{equation}
with Fourier transform $\hat{f}$ defined as
\begin{equation}
\hat{f}(y)=\int_{\mathbb{R}}e^{-ixy}f(x)dx.
\end{equation}
It turns out that $\disp C_H=\frac{[\Gamma(2H+1)\sin(\pi H)]^\frac{1}{2}}{[2 \Gamma(H-\frac{1}{2}) \cos (\frac{1}{2}\pi (H-\frac{1}{2}))]}$, $\Gamma(.)$ is gamma function and explicit expression for $M$ is as follows:
\begin{equation}
\mbox{for} \ H\in (0,\frac{1}{2}),\  Mf(x)=C_H\int_{\mathbb{R}}\frac{f(x-t)-f(x)}{|t|^{\frac{3}{2}-H}}dt
\end{equation}
\begin{equation}
\mbox{for} \ H=\frac{1}{2},\  Mf(x)=f(x)
\end{equation}
\begin{equation}
\mbox{for} \ H\in (\frac{1}{2},1), \  Mf(x)=C_H\int_{\mathbb{R}}\frac{f(t)}{|t-x|^{\frac{3}{2}-H}}dt
\end{equation}
$M$ extends $\mathcal{S}(\mathbb{R})$ to $L^2_H$ where
\begin{eqnarray*}
L^2_H(\mathbb{R})&=& \{f:\mathbb{R}\rightarrow \mathbb{R}(\mbox{deterministic});|y|^{\frac{1}{2}-H} \hat{f}(y)\in L^2(\mathbb{R})\}\\
&=& \{f:\mathbb{R}\rightarrow \mathbb{R};Mf(x)\in L^2(\mathbb{R})\}\\
&=& \{f:\mathbb{R}\rightarrow \mathbb{R}; \|f\|_{L^2_H}<\infty \}, \mbox{where}  \|f\|_{L^2_H}=\|Mf\|_{L^2}\}
\end{eqnarray*}
We also have for $f \in L^2_H(\mathbb{R})$ 
\begin{equation}
\langle f,g\rangle_{L^2_H}=\langle Mf,Mg\rangle_{L^2}
\end{equation}
and for $f,g \in L^2(\mathbb{R})\cap L^2_H(\mathbb{R})$
\begin{equation}
\langle f, Mg\rangle_{L^2}=\langle \hat f,\widehat {Mg}\rangle_{L^2}=\langle \widehat {Mf}, \hat g\rangle_{L^2}=\langle Mf, g\rangle_{L^2}
\end{equation}
\subsection{\bf Wiener Integral with respect to FBM, $H\in(0,1)$}\label{secwi}

Let $f\in L^2_H(\mathbb{R})$, deterministic. Then $Mf \in L^2(\mathbb{R})$. 
The Wiener integral with respect to fractional Brownian motion is defined as
\begin{equation}
\int_{\mathbb{R}}f(s)dB_s^H =\int_{\mathbb{R}}(Mf)(s)dB_s
\end{equation}

For detail see Appendix.
\section{\bf On the way to calculate European call option price\label{sec4}}
In this section we follow similar line of argument in that of Elliot and Van der Hoek (2003). 

\subsection{Risk-neutral measure}
Let us write equation (\ref{eqn02}) with the notion if Wick product. Now following Elliott and Van der Hoek (2003) we rewrite equation (\ref{eqn02}) as follows:
\begin{equation}
dS_t=S_t\diamond [\mu_t +\sigma_t W^H_t]dt \label{eqn8}
\end{equation} 
where $\diamond$ is the Wick product for two processes and $\disp W^H_t=\frac{dB^H_t}{dt}$. For meaning of Wick product and $W^H_t$ see Appendix (\ref{WP}). From theorem of ``Wick Ito integral" Biagini {\it et al.} (2008) or (Appendix (\ref{WP})) we note that $S_tdB^H_t=S_t\diamond W^H_t dt$. 
We denote trading strategy or portfolio as $\theta(t,\omega)=\theta(t)=(u(t),v(t))=(u_t,v_t)$ where $u(t)$ and $v(t)$ are the number of units of bond and stock respectively in the portfolio at time $t$ and the processes are adaptive. The value process is defined as
\begin{equation}
z_t^{\theta}=u_tP_t+v_t S_t \label{eqn1}
\end{equation}
{\bf Definition}: The concept analogous to self-financing in the fractional Brownian setting is Wick-financing. Elliott and Van der Hoek mention it as self financing but Wick financing strategy is not usual buy and hold strategy. A portfolio is {\bf Wick-financing} if
\begin{equation}
dz^{\theta}_t=u_tdP_t+v_t dS_t= u_tdP_t+v_t S_t \diamond [\mu_t +\sigma_t W^H_t]dt\label{eqn2}
\end{equation}
\begin{eqnarray*}dz_t^{\theta}&=&u_tdP_t+v_t S_t\diamond(\mu_t+\sigma_t W^H_t)dt \\
&=&u_tr_tP_td_t+v_t S_t\diamond \mu_t dt+\sigma_tv_t S_t \diamond  W^H_t dt\\
&=&(z_t-v_t S_t)r_td_t+\mu_tv_t S_tdt+\sigma_tv_tS_t \diamond  W^H_t dt\\
&=&z_tr_tdt+\sigma_tv_t S_t\diamond \left(\frac{\mu_t-r_t}{\sigma_t}+ W^H_t\right)dt\\
\end{eqnarray*}
From the Girsanov theorem in Elliott and Van der Hoek (2003) the translated process $\hat{B}^H_t$ as
\begin{equation}
\hat{B}^H_t=\int_0^t\frac{\mu_s-r_s}{\sigma_s}d_s+B^H_t \label{eqn3}
\end{equation} is fBm with respect to new measure $\hat P$ defined on $\mathcal{F}$ by $ \disp  \frac{d\hat P}{d P}=\exp(\langle \omega, \phi \rangle -\frac{1}{2}\|\phi\|^2_{L^2})=\exp(\int_{\mathbb{R}}\phi(s)dB_s-\frac{1}{2}\|\phi\|^2_{L^2})$
where $\disp \phi (s) =M^{-1} \left(\frac{r(s)-\mu(s)}{\sigma(s)}\right)I_{[0,t]}(s) $. For notation $\exp(\langle \omega, \phi \rangle -\frac{1}{2}\|\phi\|^2_{L^2})$ see Appendix (\ref{WI}). We note that $\phi(s)$ has to be in $L^2{\mathbb{R}}$.

Now we can rewrite \ref{eqn2} as
\begin{equation}
dz_t=r_tz_tdt+\sigma_tv_t S_t \diamond \hat{W}_t^H dt
\end{equation} where, $\disp \hat{W}^H_t=\frac{d\hat{B}^H_t}{dt}$. 
Multiplying both sides with $\exp(\disp\tilde{r}_t)$ with $\disp\tilde{r}_t=\int_0^t r_s ds$ and integrating, we get
\begin{equation} e^{-\tilde{r}_t}z_t-z_0=\int_0^t e^{-\tilde{r}_s}\sigma_s v_s S_s \diamond \hat{W}_s^H ds\label{eqn6}
\end{equation}
and
\begin{equation*}
\hat E \left[e^{-\tilde{r}_T}z_T\right]=z_0
\end{equation*} where $\hat E$ is expectation under measure $\hat P$.
Thus there exists a risk-neutral measure.

We note that under risk neutral measure $\hat{P}$ we have
\begin{equation}
dS_t=r_t S_t dt+\sigma_t S_t d\hat{B}^H_t
\end{equation}
which will be useful for calculating option price.
\subsection{Complete Market}
Let $\mathcal{F}_t=\sigma\{B^H_s,\ 0\leq s \leq t\}$ be the filtration.  The market is complete if $\forall  \ \mathcal{F}_T $ measurable bounded random variable $F$, $\exists \ z\in \mathbb{R}$ and portfolio $(u_t,v_t)$ such that $F=z_T$ almost surely $\hat P$, where $z_T$ is given by \ref{eqn1}. We now proceed to verify this. By fractional Clark-Ocone theorem in Elliott and Van der Hoek (2003) applied to $F$, we have,
\begin{equation}
e^{-\tilde{r}_T}F=
\hat E\left[e^{-\tilde{r}_T}F\right]
+\int_0^T \tilde{E}_{\hat{P}}\left[\hat{D}_t(e^{-\tilde{r}_T}F)\mid \mathcal{F}_t \right] \diamond \hat{W}_t^H dt\label{eqn7}
\end{equation}
Here $\tilde{E}_{\hat{P}}$ denotes the quasi-conditional expectation and $\hat{D}_t$ is the fractional Hida Malliavin derivative with respect to $\hat{B}^H_t$. For detail see  Elliot and van der Hoek (2003)and Biagini {\it et. al.} (2008).
We take $z=\hat E\left[e^{-\tilde{r}_T}F\right]$. Now comparing equations (\ref{eqn6}) and (\ref{eqn7}) we get
\begin{equation*}
\tilde{E}_{\hat{P}}\left[\hat{D}_t(F)\mid \mathcal{F}_t\right]=e^{\tilde{r}_T-\tilde{r}_t}\sigma_tv_t S_t
\end{equation*}
This is the condition for completeness of the market. Here we note that there is criticism about this notion of completeness with Wick financing instead of self financing, see Bjrk and Hult (2005). Fractional Black Scholes market has weak arbitrage but no strong arbitrage, see Biagini et al.(2008).

In the context of quasi conditional expectation we require following lemma which will be useful for calculating option price in next section.
\begin{lemma}\label{qce}
a) If $\disp g_t=\exp\big(\int_0^t\sigma_s d\hat{B}^H_s- \frac{1}{2} \int_\mathbb{R}(M(\sigma_s\chi_{[0,t]}))^2ds\big)$ then for $T>t$, $\tilde{E}_{\hat{P}}[g_T| \mathcal{F}_t]=g_t$.

b) If $F\in L^{1,2}(\hat{P})$ (similar to Definition A4 of Elliott and Van der Hoek (2003)), and $\disp G_t= \int_0^t F_td\hat{B}^H_t$, 
then for $T>t$, $\tilde{E}_{\hat{P}}[G_T| \mathcal{F}_t]=G_t$.
\end{lemma}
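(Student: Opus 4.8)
The natural setting is the Wiener chaos / white--noise description of the fractional Wick--It\^o--Skorohod (WIS) calculus already in force above: a square--integrable functional $F$ of $\hat B^H$ has a chaos expansion $F=\sum_{n\ge0}I^H_n(f_n)$ with symmetric kernels $f_n$; the Wick product $\diamond$ acts as the symmetrised tensor product on kernels; the Skorohod (WIS) integral $\delta$ raises the chaos order by one; and the quasi-conditional expectation $\tilde{E}_{\hat{P}}[\,\cdot\mid\mathcal{F}_t]$ is the operator that truncates every kernel to $[0,t]$ in each of its variables, $\tilde{E}_{\hat{P}}[F\mid\mathcal{F}_t]=\sum_{n\ge0}I^H_n\big(f_n\,\chi_{[0,t]}^{\otimes n}\big)$ (this is the notion used in Elliott and Van der Hoek (2003) and Biagini et al.\ (2008), and can equivalently be packaged through an $S$-transform identity). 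The plan for both parts is simply to write each side in chaos form and match kernels.

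For part (a) I would first note that, because $\sigma$ is deterministic, $\int_0^t\sigma_s\,d\hat{B}^H_s$ is a centred Gaussian variable whose variance is exactly $\int_{\mathbb{R}}(M(\sigma_s\chi_{[0,t]}))^2\,ds$; hence $g_t$ is literally the Wick (stochastic) exponential $g_t=\exp^{\diamond}\!\big(\int_0^t\sigma_s\,d\hat{B}^H_s\big)$. Writing $\int_0^T\sigma_s\,d\hat{B}^H_s=I^H_1(\sigma\chi_{[0,T]})$ and using $\exp^{\diamond}\!\big(I^H_1(h)\big)=\sum_{n\ge0}\tfrac1{n!}I^H_n(h^{\otimes n})$ gives the expansion $g_T=\sum_{n\ge0}\tfrac1{n!}I^H_n\big(\sigma^{\otimes n}\chi_{[0,T]^n}\big)$, convergent in $L^2(\hat{P})$ since $\sigma$ is bounded on $[0,T]$. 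Applying the kernel--truncation formula for $\tilde{E}_{\hat{P}}[\,\cdot\mid\mathcal{F}_t]$ and using $\chi_{[0,T]^n}\chi_{[0,t]^n}=\chi_{[0,t]^n}$ for $t<T$, one recovers $\sum_{n\ge0}\tfrac1{n!}I^H_n(\sigma^{\otimes n}\chi_{[0,t]^n})=\exp^{\diamond}\!\big(\int_0^t\sigma_s\,d\hat{B}^H_s\big)=g_t$.

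For part (b) I would expand the integrand, $F_s=\sum_{n\ge0}I^H_n(f_n(\cdot\,,s))$, so that $G_T=\delta(F\chi_{[0,T]})=\sum_{n\ge0}I^H_{n+1}\big(\widetilde{f_n\chi_{[0,T]}}\big)$, the tilde denoting symmetrisation in all $n+1$ arguments; the hypothesis $F\in L^{1,2}(\hat{P})$ is what makes this an $L^2(\hat{P})$ series. Truncating each kernel to $[0,t]^{n+1}$, and using that $t<T$ makes the last--variable indicator equal to $1$ there, together with the fact that restriction to the permutation--invariant set $[0,t]^{n+1}$ commutes with symmetrisation, one identifies $\widetilde{f_n\chi_{[0,T]}}\,\chi_{[0,t]^{n+1}}$ with the $(n+1)$-th kernel of $G_t=\delta(F\chi_{[0,t]})$; here one invokes adaptedness of $F$ (part of the definition of $L^{1,2}(\hat{P})$, and in any case enjoyed by the integrands produced by the fractional Clark--Ocone representation of Section~\ref{sec4}), which forces $u_i\le t$ whenever $u_{n+1}\le t$ so that the two restrictions agree a.e. Summing over $n$ yields $\tilde{E}_{\hat{P}}[G_T\mid\mathcal{F}_t]=G_t$. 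As a consistency check, (a) is also a corollary of (b): the fractional Wick--It\^o formula gives the linear WIS equation $g_t=1+\int_0^t\sigma_s g_s\,d\hat{B}^H_s$, whence $g_T-g_t=\int_t^T\sigma_s g_s\,d\hat{B}^H_s$ and (b) applied to the (adapted) integrand $\sigma g$ gives $\tilde{E}_{\hat{P}}[g_T-g_t\mid\mathcal{F}_t]=0$.

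The bookkeeping with chaos kernels and symmetrisations is routine; the two places that need genuine care are (i) justifying that $\tilde{E}_{\hat{P}}[\,\cdot\mid\mathcal{F}_t]$ may be applied term by term to the infinite chaos series, i.e.\ that the objects involved really lie in $L^2(\hat{P})$ (or in the Hida space on which $\tilde{E}_{\hat{P}}$ is continuous) --- immediate for (a) from boundedness of $\sigma$, and precisely the content of $F\in L^{1,2}(\hat{P})$ for (b) --- and (ii) the adaptedness point in part (b): without it the Skorohod--integral ``process'' need not be a quasi-martingale, so one must be explicit that the integrands at hand are adapted. I expect (ii) to be the main thing to state carefully.
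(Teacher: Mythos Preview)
Your proposal is correct and is precisely the ``direct calculation'' the paper gestures at (the paper's own proof reads, in full, ``Proof can be done by direct calculation''). You have supplied the chaos--expansion bookkeeping the paper omits, and you are right to flag the adaptedness hypothesis in part~(b): without it the truncated kernel $\tilde g_T\,\chi_{[0,t]^{n+1}}$ need not coincide with the kernel of $G_t$, so the quasi-martingale property can fail; the paper is silent on this point, and your care here is an improvement over the original.
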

\begin{proof}
Proof can be done by direct calculation.
\end{proof}
\subsection{Price of European Call Option}
We next will find the European call option price $\tilde{E}_{\hat{P}}[(S_T - K)_+|\mathcal{F}_t]$ for this model. When $\mu_t = \mu$ and $\sigma_t = \sigma$,  Necula (2008) obtains the price $C$ at every $t \in (0,T)$ of an European call option with strike price $K$ and
maturity $T$ as \begin{eqnarray}
&C(t, S_t)= S_t \Phi(d_1) -Ke^{-r(T-t)}\Phi(d_2)&\\
\mathrm{where}\quad&
d_1=\frac{\log(\frac{S_t}{K})+r(T-t)+\frac{\sigma^2}{2}(T^{2H}-t^{2H})}{\sigma\sqrt{T^{2H}-t^{2H}}}&\quad \mathrm{
and}\\&
d_2=\frac{\log(\frac{S_t}{K})+r(T-t)-\frac{\sigma^2}{2}(T^{2H}-t^{2H})}{\sigma\sqrt{T^{2H}-t^{2H}}}
&\end{eqnarray}
and $\Phi(×)$ is the cumulative probability of the standard normal distribution.

The confidence intervals for $\sigma^2$ obtained in section \ref{sec5} can be translated to prediction intervals for $C$ as in Mykland (2000) or Avellaneda et al (1995).

Next for time varying $\mu_t$ and $\sigma_t$ let us calculate option price. We need the following lemma.

\begin{lemma}
The price at every $t\in[0,T]$ of bounded $\mathcal{F}_T$ measurable function $F\in L^2(\hat{P})$ is given by $F(t)=\exp(-\tilde{r}_T+\tilde{r}_t) \tilde{E}_{\hat{P}}[F|\mathcal{F}_t] $. 
\end{lemma}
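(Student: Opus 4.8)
The plan is to identify the time-$t$ price of the claim $F$ with the value $z_t$ of the Wick-financing portfolio replicating $F$ (so that $z_T = F$ almost surely $\hat{P}$), whose existence, together with the uniqueness of its value process, is furnished by the completeness argument of the preceding subsection; one then evaluates $z_t$ by applying the quasi-conditional expectation to the fractional Clark-Ocone representation of $F$.

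First I would set $z_0 = z = \hat{E}[e^{-\tilde{r}_T}F]$ and let $(u_t,v_t)$ be the corresponding replicating portfolio, so that by \ref{eqn6} the discounted value obeys $e^{-\tilde{r}_t}z_t = z_0 + \int_0^t e^{-\tilde{r}_s}\sigma_s v_s S_s \diamond \hat{W}_s^H\, ds$. The completeness relation just derived reads $\tilde{E}_{\hat{P}}[\hat{D}_s(F)\mid\mathcal{F}_s] = e^{\tilde{r}_T-\tilde{r}_s}\sigma_s v_s S_s$, which by linearity of $\hat{D}$ and of $\tilde{E}_{\hat{P}}[\,\cdot\mid\mathcal{F}_s]$ is equivalent to the identification $\tilde{E}_{\hat{P}}[\hat{D}_s(e^{-\tilde{r}_T}F)\mid\mathcal{F}_s] = e^{-\tilde{r}_s}\sigma_s v_s S_s$ for $s\in[0,T]$.

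Next I would invoke the fractional Clark-Ocone representation \ref{eqn7}, $e^{-\tilde{r}_T}F = z_0 + \int_0^T \tilde{E}_{\hat{P}}[\hat{D}_s(e^{-\tilde{r}_T}F)\mid\mathcal{F}_s]\diamond\hat{W}_s^H\, ds$, write its Wick-Itô-Skorohod term as $\int_0^T G_s\, d\hat{B}_s^H$ with $G_s = \tilde{E}_{\hat{P}}[\hat{D}_s(e^{-\tilde{r}_T}F)\mid\mathcal{F}_s]$, and apply $\tilde{E}_{\hat{P}}[\,\cdot\mid\mathcal{F}_t]$. Since $z_0$ is a deterministic constant and Lemma \ref{qce}(b) supplies the fractional martingale property $\tilde{E}_{\hat{P}}[\int_0^T G_s\, d\hat{B}_s^H\mid\mathcal{F}_t] = \int_0^t G_s\, d\hat{B}_s^H$, I get $\tilde{E}_{\hat{P}}[e^{-\tilde{r}_T}F\mid\mathcal{F}_t] = z_0 + \int_0^t G_s\, d\hat{B}_s^H$. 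By \ref{eqn6} and the identification $G_s = e^{-\tilde{r}_s}\sigma_s v_s S_s$ from the previous paragraph, the right-hand side equals $e^{-\tilde{r}_t}z_t$, so $\tilde{E}_{\hat{P}}[e^{-\tilde{r}_T}F\mid\mathcal{F}_t] = e^{-\tilde{r}_t}z_t$. Finally, $\tilde{r}_t$ and $\tilde{r}_T$ are deterministic and hence factor out of $\tilde{E}_{\hat{P}}[\,\cdot\mid\mathcal{F}_t]$, and solving for $z_t$ gives $z_t = e^{-\tilde{r}_T+\tilde{r}_t}\tilde{E}_{\hat{P}}[F\mid\mathcal{F}_t]$, which is the stated formula with $F(t) := z_t$ (and indeed $F(0) = e^{-\tilde{r}_T}\hat{E}[F] = z_0$, as it should be).

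The step I expect to be the main obstacle is making rigorous the interchange of the quasi-conditional expectation with the Wick-Itô-Skorohod integral: one must check that $F$ bounded and in $L^2(\hat{P})$ forces the Clark-Ocone integrand $G$ into the class $L^{1,2}(\hat{P})$ to which Lemma \ref{qce}(b) applies, and that $\hat{D}_s F$, the quasi-conditional expectations, and the Wick products in play are all well defined in the appropriate $L^2$/Hida-distribution spaces. These are exactly the properties guaranteed by the fractional Clark-Ocone theorem and the fractional Malliavin calculus of Elliott and Van der Hoek (2003) and Biagini et al. (2008), which we take as given; everything else is linearity of $\tilde{E}_{\hat{P}}[\,\cdot\mid\mathcal{F}_t]$, the determinism of $\tilde{r}$, and the application of Lemma \ref{qce}.
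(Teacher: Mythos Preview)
Your proposal is correct and follows essentially the same route the paper indicates: the paper's own proof is a one-line reference to Theorem 4.1 of Necula (2008) together with part (b) of Lemma \ref{qce}, and what you have written is precisely a spelled-out version of that argument---replicate $F$ via the completeness/Clark--Ocone construction, apply $\tilde{E}_{\hat{P}}[\,\cdot\mid\mathcal{F}_t]$ to the representation \ref{eqn7}, and use Lemma \ref{qce}(b) to handle the stochastic integral. Your caveat about verifying $G\in L^{1,2}(\hat{P})$ is well placed and is exactly the technical point the paper sweeps into the citation of Elliott and Van der Hoek (2003).
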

\begin{proof}
Proof can be followed in similar line as in Theorem 4.1 from Necula (2008) and using part b) of lemma \ref{qce}.
\end{proof}
For European call option price $F$ will be $F(\omega)=(S(T,\omega)-K)^+$ where $K$ is the strike price.

Now following similar line of approach from Theorem 3.1 of Necula (2008) and using part a) of lemma \ref{qce} we get
\begin{equation}\label{cet}
\tilde{E}_{\hat{P}}\left(\exp\left(\int_0^T\sigma_sd\hat{B}^H_s\right)|\mathcal{F}_t\right)=
\exp\left(\int_0^t\sigma_sd\hat{B}^H_s-\frac{1}{2}\int_\mathbb{R}(M(\sigma_s\chi_{[{0,t}]}))^2ds+\frac{1}{2}\int_\mathbb{R}(M(\sigma_s\chi_{[{0,T}]}))^2ds\right)
\end{equation}
Equation (\ref{cet}) will be used for proving next theorem.
\begin{theorem}
The price at every $T\in[0,T]$ of an European call option with strike price $K$ and maturity $T$ is given by $S_t\Phi(d_1)-K \exp(-\tilde{r}_T+\tilde{r}_t)\Phi(d_2)$ where $$d_1=\disp\frac{\ln (S_t/K)+\tilde{r}_T-\tilde{r}_t+\int_\mathbb{R}(M(\sigma_s\chi_{[{0,T}]}))^2ds-\int_\mathbb{R}(M(\sigma_s\chi_{[{0,t}]}))^2ds}{\sqrt{\int_\mathbb{R}(M(\sigma_s\chi_{[{0,T}]}))^2ds-\int_\mathbb{R}(M(\sigma_s\chi_{[{0,t}]}))^2ds}}$$ and $$d_2=\disp\frac{\ln (S_t/K)+\tilde{r}_T-\tilde{r}_t-\int_\mathbb{R}(M(\sigma_s\chi_{[{0,T}]}))^2ds+\int_\mathbb{R}(M(\sigma_s\chi_{[{0,t}]}))^2ds}{\sqrt{\int_\mathbb{R}(M(\sigma_s\chi_{[{0,T}]}))^2ds-\int_\mathbb{R}(M(\sigma_s\chi_{[{0,t}]}))^2ds}} $$.
\end{theorem}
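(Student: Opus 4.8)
The plan is to reproduce the classical Black--Scholes derivation, with the genuine conditional expectation replaced by the quasi-conditional expectation $\tilde{E}_{\hat{P}}[\,\cdot\,|\,\mathcal{F}_t]$ and with equation (\ref{cet}) and Lemma \ref{qce} supplying the Gaussian structure. Write $\disp v_{t,T}^{2}=\int_{\mathbb{R}}(M(\sigma_s\chi_{[0,T]}))^{2}ds-\int_{\mathbb{R}}(M(\sigma_s\chi_{[0,t]}))^{2}ds$; since $\sigma$ is bounded and deterministic, $\sigma_s\chi_{[0,u]}\in L^{2}_{H}(\mathbb{R})$ for every $u\in[0,T]$, so this quantity is finite. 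By the preceding pricing lemma, applied to $F=(S_T-K)^{+}$, the option price at $t$ equals $\exp(-\tilde{r}_T+\tilde{r}_t)\,\tilde{E}_{\hat{P}}[(S_T-K)^{+}|\mathcal{F}_t]$, while under $\hat{P}$ the solution (\ref{eqn4}) (with $B^{H},\mu_s$ replaced by $\hat{B}^{H},r_s$) reads $S_u=S_0\exp(\int_{0}^{u}\sigma_s d\hat{B}^{H}_s+\tilde{r}_u-\tfrac12\int_{\mathbb{R}}(M(\sigma_s\chi_{[0,u]}))^{2}ds)$ for $u\in[0,T]$; taking $u=t$ lets one recover $\int_0^t\sigma_s d\hat{B}^{H}_s$ from $S_t$.

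First I would determine the ``quasi-conditional law'' of $X:=\int_0^T\sigma_s d\hat{B}^{H}_s$ given $\mathcal{F}_t$. Rerunning the derivation of (\ref{cet}) with $\lambda\sigma$ in place of $\sigma$, for an arbitrary $\lambda\in\mathbb{R}$ --- legitimate by Lemma \ref{qce}(a) applied to $g^{(\lambda)}_u=\exp(\lambda\int_0^u\sigma_s d\hat{B}^{H}_s-\tfrac{\lambda^{2}}{2}\int_{\mathbb{R}}(M(\sigma_s\chi_{[0,u]}))^{2}ds)$ --- gives
\[
\tilde{E}_{\hat{P}}\big[\,e^{\lambda X}\,\big|\,\mathcal{F}_t\big]=\exp\Big(\lambda\int_0^t\sigma_s d\hat{B}^{H}_s+\tfrac{\lambda^{2}}{2}\,v_{t,T}^{2}\Big),
\]
so that, at the level of quasi-conditional expectations, $X$ behaves like a Gaussian variable with mean $m:=\int_0^t\sigma_s d\hat{B}^{H}_s$ and variance $v_{t,T}^{2}$. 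I would then extend this identity from exponentials to the payoff map $x\mapsto(S_0e^{x+\tilde{r}_T-\frac12\int_{\mathbb{R}}(M(\sigma_s\chi_{[0,T]}))^{2}ds}-K)^{+}$, either by approximating the latter by finite linear combinations of exponentials and passing to the limit in $L^{2}(\hat{P})$, or via the quasi-conditional-expectation lemma of Elliott and Van der Hoek (2003), to obtain
\[
\tilde{E}_{\hat{P}}\big[(S_T-K)^{+}\,\big|\,\mathcal{F}_t\big]=\int_{\mathbb{R}}\Big(S_0e^{\,m+v_{t,T}z+\tilde{r}_T-\frac12\int_{\mathbb{R}}(M(\sigma_s\chi_{[0,T]}))^{2}ds}-K\Big)^{+}\varphi(z)\,dz,
\]
with $\varphi$ the standard normal density.

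The remaining step is to evaluate this Gaussian integral just as in Black--Scholes. The integrand is positive precisely for $z>-d_2$, where $d_2$ is the quantity in the statement, so the $-K$ term contributes $-K\,\Phi(d_2)$. For the $S_0e^{\cdots}$ term I complete the square in the exponent --- the resulting shift of the integration variable by $v_{t,T}$ is exactly what turns the argument $d_2$ into $d_1$ --- and then use the $u=t$ identity above to rewrite the remaining prefactor $S_0e^{\,m+\tilde{r}_T-\frac12\int_{\mathbb{R}}(M(\sigma_s\chi_{[0,T]}))^{2}ds}$ as $S_te^{\,\tilde{r}_T-\tilde{r}_t}$, leaving $S_te^{\,\tilde{r}_T-\tilde{r}_t}\Phi(d_1)$. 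Multiplying through by $\exp(-\tilde{r}_T+\tilde{r}_t)$ gives the claimed price $S_t\Phi(d_1)-K\exp(-\tilde{r}_T+\tilde{r}_t)\Phi(d_2)$. Equivalently, $\tilde{E}_{\hat{P}}[S_T\mathbf{1}_{\{S_T>K\}}|\mathcal{F}_t]$ can be treated by a fractional Girsanov change of measure driven by the Wick exponential $g^{(1)}_T/g^{(1)}_t$ of Lemma \ref{qce}(a), which shifts the quasi-conditional mean of $X$ by $v_{t,T}^{2}$ and yields $\Phi(d_1)$ directly, while $\tilde{E}_{\hat{P}}[S_T|\mathcal{F}_t]=S_te^{\,\tilde{r}_T-\tilde{r}_t}$ by (\ref{cet}).

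The main obstacle is the second step. Because $\tilde{E}_{\hat{P}}[\,\cdot\,|\,\mathcal{F}_t]$ is only a quasi-conditional expectation --- neither positivity preserving nor represented by a genuine regular conditional distribution --- one cannot simply assert ``$X$ is conditionally Gaussian, hence substitute into $\Phi$''. Pushing the explicit formula for $\tilde{E}_{\hat{P}}[e^{\lambda X}|\mathcal{F}_t]$ through to the non-smooth call payoff, and verifying that the fractional Girsanov theorem and Lemma \ref{qce} commute with the required limiting procedure, is precisely where the Wick--It\^o--Skorohod calculus and the Wiener-chaos machinery of Biagini et al. (2008) and Elliott and Van der Hoek (2003) are needed in place of ordinary stochastic calculus. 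This is the same technical point that underlies Theorems 3.1 and 4.1 of Necula (2008); I would import it from there and check only that the constant-coefficient arguments carry over verbatim once $\sigma^{2}(T^{2H}-t^{2H})$ is replaced by $v_{t,T}^{2}$ and $r(T-t)$ by $\tilde{r}_T-\tilde{r}_t$.
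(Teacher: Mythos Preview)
Your proposal is correct and follows essentially the same approach as the paper: the paper's proof consists of the single line ``Proof is in similar line as that of given in Necula (2008)'', and your sketch is precisely a fleshing-out of that reference --- use the pricing lemma to reduce to a quasi-conditional expectation, invoke (\ref{cet}) and Lemma \ref{qce} for the Gaussian structure, and then carry out the Black--Scholes Gaussian integral with $\sigma^{2}(T^{2H}-t^{2H})$ and $r(T-t)$ replaced by $v_{t,T}^{2}$ and $\tilde{r}_T-\tilde{r}_t$. You even identify exactly the technical point (passing from exponential moments to the non-smooth payoff under a quasi-conditional expectation) that must be imported from Necula's Theorems 3.1 and 4.1, which is all the paper itself does.
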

\begin{proof}
Proof is in similar line as that of given in Necula (2008).
\end{proof}

\section{\bf Estimation of volatility from discrete observations}\label{sec5}
Assume that the process is observed at discrete-time instants $0=t_0<t_1< t_2< \cdots < t_N=T$.
Thus the observation vector is $S = (S_{t_1} , S_{t_2} , \cdots, S_{t_N} )$. We note that this is high frequency data. In particular, we assume $t_k = kh, k = 1, 2,\cdots ,N$ for a step size, $\disp h=\frac{1}{N} > 0$.
In section \ref{sec:const} we present the results when $\sigma_t$ is constant and in section \ref{sec:tv} when $\sigma_t$ is time varying. In section \ref{sec:Sim1} and \ref{sec:sim2} we present simulation studies.
\subsection{Constant $\sigma$}\label{sec:const}
Let us start with the estimator of $\sigma^2$ as
\begin{equation}\label{eqn5}\hat{\sigma^2} =\frac{1}{Nh^{2H}}\sum^{N-1}_{k=0}\left(\log(S_{t_{k+1}})-\log(S_{t_k})\right)^2\end{equation}

We shall prove a central limit theorem for $\hat{\sigma^2}$. The main component of the proof is central limit theorem for quadratic variations of fractional Brownian motion. We also need to bound the additional terms that comes from the geometric nature of our process. Some of these arguments are similar to those of Nourdin (2008, 2009). The main theorem for this section is given below.
\begin{theorem}\label{thm1} Assume that the stock price follows the diffusion model specified by equation (\ref{eqn3}) with no drift and constant volatility $\sigma$. Also assume that $H\in (0,3/4)$. $N\rightarrow\infty$ with the observation interval $Nh=T$ remaining constant. Without loss of generality we can assume $T=1$.
Then \begin{equation}\sqrt{N}(\hat{\sigma^2}-\sigma^2) \Longrightarrow \mathcal{N}(0,\sigma^2_{H,2})\end{equation} where $\sigma^2_{H,2}$ is a constant that can be computed explicitly, $$\disp \sigma^2_{H,2}=2\sigma^4\lim_{N\rightarrow\infty}(1+2(1-\frac{1}{N})(2^{2H}-1)^2+\sum_{k=2}^N(1-\frac{k}{N})[(k+1)^{2H}+(k-1)^{2H}-2k^{2H}]^2).$$
\begin{proof}  Under the condition of $\mu=0$ and $\sigma_t=\sigma$, the solution (\ref{eqn4}) of the stochastic differential equation (\ref{eqn1}) simplifies to \begin{equation}S_t = S_0 \exp\left(\sigma dB^H_t  -\frac{1}{2} \sigma^2 t^{2H}\right).\end{equation}  Putting this solution in the definition of $\hat{\sigma^2}$ in equation (\ref{eqn5}), we get,
\begin{eqnarray}\label{eqn10}\hat{\sigma^2}& =&\frac{1}{Nh^{2H}}\sum^{N-1}_{k=0}\left[\sigma\left(B_{(k+1)h}^H- B_{kh}^H \right) -\frac{\sigma^2}{2}\left(\{(k+1)h\}^{2H}-\{kh\}^{2H}\right)\right]^2\nonumber\\
&=&\frac{1}{Nh^{2H}}\sigma^2\left[\begin{array}{l}\sum^{N-1}_{k=0}\left(B_{(k+1)h}^H- B_{kh}^H \right)^2 \\-\sigma \sum^{N-1}_{k=0}\left\{\left(B_{(k+1)h}^H- B_{kh}^H \right)\left(\{(k+1)h\}^{2H}-\{kh\}^{2H}\right) \right\}\\ +\frac{\sigma^2}{4}\sum^{N-1}_{k=0}\left(\{(k+1)h\}^{2H}-\{kh\}^{2H}\right)^2\end{array}\right]
\end{eqnarray}
It is already known, for example putting $\kappa=2$ in equation (1.5) of Nourdin(2008), that if $H\in (0,3/4)$, 
\begin{equation}\label{eqn11}X_N:=\frac{1}{\sqrt{N}}\sum^{N-1}_{k=0}\left[h^{-2H}\left(B_{(k+1)h}^H- B_{kh}^H \right)^2 -1\right]\Longrightarrow \mathcal{N}(0,\sigma^2_{H,2})
\end{equation}
Combining (\ref{eqn10}) and (\ref{eqn11}), we have
\begin{eqnarray}\sqrt{N}(\hat{\sigma^2}-\sigma^2)&=&X_N-\frac{\sigma^3}{\sqrt{N}h^{2H}} \sum^{N-1}_{k=0}\left\{\left(B_{(k+1)h}^H- B_{kh}^H \right)\left(\{(k+1)h\}^{2H}-\{kh\}^{2H}\right) \right\}\nonumber\\&& +\frac{\sigma^4}{4\sqrt{N}h^{2H}}\sum^{N-1}_{k=0}\left(\{(k+1)h\}^{2H}-\{kh\}^{2H}\right)^2.
\end{eqnarray}
It is shown in lemma \ref{lem1} that the second term converges to zero in $L^2$ as $N\rightarrow\infty$. In lemma \ref{lem2} it is shown that the third term converges to zero. The theorem now follows by applying Chebyshev's inequality to get convergence in $L^2$ implies convergence in probability and Slutsky's theorem to get final asymptotic normality.
\end{proof}
\end{theorem}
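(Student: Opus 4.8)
The plan is to substitute the closed-form solution, reduce $\hat{\sigma^2}$ to a normalized quadratic variation plus two lower-order remainder sums, apply the known central limit theorem (\ref{eqn11}) to the leading piece, and kill the two remainders by direct $L^2$-estimates before invoking Slutsky. Using (\ref{eqn4}) with $\mu\equiv 0$ and $\sigma_t\equiv\sigma$ gives $S_t=S_0\exp\bigl(\sigma B^H_t-\frac{1}{2}\sigma^2 t^{2H}\bigr)$; writing $\Delta_k:=B^H_{(k+1)h}-B^H_{kh}$ and $a_k:=\{(k+1)h\}^{2H}-\{kh\}^{2H}$, the log-increments of the price are $\sigma\Delta_k-\frac{\sigma^2}{2}a_k$, so expanding the square in (\ref{eqn5}) yields the exact identity
\[
\hat{\sigma^2}=\frac{\sigma^2}{Nh^{2H}}\sum_{k=0}^{N-1}\Delta_k^2-\frac{\sigma^3}{Nh^{2H}}\sum_{k=0}^{N-1}\Delta_k a_k+\frac{\sigma^4}{4Nh^{2H}}\sum_{k=0}^{N-1}a_k^2 .
\]
Because $\E[\Delta_k^2]=h^{2H}$ by homogeneity of the increments, the first sum has mean exactly $\sigma^2$, and after centering and multiplying by $\sqrt N$ it equals $\sigma^2 X_N$ with $X_N$ the statistic in (\ref{eqn11}); hence this part alone converges to $\mathcal N(0,\sigma^2_{H,2})$ by (\ref{eqn11}) (valid because $H<3/4$), the constant being the sum of the squared increment-autocorrelations $\rho^H_n$. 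It then remains to show that, after scaling by $\sqrt N$, the second and third sums are $o_P(1)$.

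The deterministic third term is the easy one. By the mean value theorem $a_k\le c\,h^{2H}(k\vee 1)^{2H-1}$ for a constant $c=c(H)$, and one has the telescoping identity $\sum_{k=0}^{N-1}a_k=(Nh)^{2H}=1$. Hence $\sum_k a_k^2=O(N^{-1})$ when $H\ge 1/4$ (up to a $\log N$ at $H=1/4$) and $\sum_k a_k^2=O(N^{-4H})$ when $H<1/4$, so $\frac{\sigma^4}{4\sqrt N h^{2H}}\sum_k a_k^2=O(N^{2H-3/2}\log N)+O(N^{-2H-1/2})\to 0$; the exponent $2H-\frac{3}{2}$ is precisely what makes $H<3/4$ the sharp threshold here. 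This is the content of one of the two remainder lemmas.

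The cross term $R_N:=\frac{\sigma^3}{\sqrt N h^{2H}}\sum_{k=0}^{N-1}\Delta_k a_k$ is the delicate step, and I expect it to be the main obstacle; I would handle it by a second lemma showing $\E[R_N^2]\to 0$. As a linear combination of jointly Gaussian increments $R_N$ is centered Gaussian with $\E[R_N^2]=\frac{\sigma^6}{Nh^{4H}}\sum_{j,k}a_j a_k\,\cov(\Delta_j,\Delta_k)$, where $\cov(\Delta_j,\Delta_k)=\rho^H_{|j-k|}$ has order $h^{2H}(1+|j-k|)^{2H-2}$. For $H<1/2$ these covariances are absolutely summable and $\E[R_N^2]\to 0$ follows easily, e.g. from $\sum_{j,k}a_j a_k|\rho^H_{|j-k|}|\le\bigl(\sum_m|\rho^H_m|\bigr)\sum_k a_k^2$, or even from the crude bound $|R_N|\le\frac{\sigma^3}{\sqrt N h^{2H}}\bigl(\sum_k\Delta_k^2\bigr)^{1/2}\bigl(\sum_k a_k^2\bigr)^{1/2}$. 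The real difficulty is $H\in[1/2,3/4)$, where $\rho^H_m$ is not summable and the crude bound decays only for $H<1/2$; there I would use that $t\mapsto t^{2H}$ is convex, so $a_k$ is nondecreasing with $\|a\|_\infty=a_{N-1}=1-(1-1/N)^{2H}=O(N^{-1})$, together with $\rho^H_m\ge 0$ and the telescoping estimate $\sum_{|m|<N}\rho^H_m=h^{2H}\bigl(N^{2H}-(N-1)^{2H}\bigr)=O(N^{-1})$. Then $\sum_{j,k}a_j a_k\rho^H_{|j-k|}\le\|a\|_\infty\|a\|_1\sum_{|m|<N}\rho^H_m=O(N^{-2})$, whence $\E[R_N^2]=\frac{\sigma^6}{Nh^{4H}}\,O(N^{-2})=O(N^{4H-3})\to 0$ exactly under $H<3/4$. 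Finally, Chebyshev's inequality turns the $L^2$-decay of the two remainders into convergence in probability, and Slutsky's theorem together with (\ref{eqn11}) gives $\sqrt N(\hat{\sigma^2}-\sigma^2)\Longrightarrow\mathcal N(0,\sigma^2_{H,2})$.
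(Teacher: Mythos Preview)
Your proof is correct and follows the same architecture as the paper: substitute the closed-form solution, expand the square into the quadratic-variation piece plus a Gaussian cross term plus a deterministic remainder, invoke Nourdin's CLT (equation (\ref{eqn11})) on the first, show the other two are $o_P(1)$, and finish with Chebyshev and Slutsky. This matches the paper's Theorem~\ref{thm1} together with Lemmas~\ref{lem1} and~\ref{lem2}.

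Where you diverge is in the mechanics of the cross-term bound (the analogue of Lemma~\ref{lem1}). The paper proceeds by brute force: it applies the mean-value bound $a_k\le 2H(k+1)^{2H-1}/N^{2H}$ and the second-order bound $\rho^H_{|j-k|}\lesssim H(2H-1)|j-k|^{2H-2}/N^{2H}$, then splits the double sum into diagonal/near-diagonal and off-diagonal pieces and estimates each separately, arriving at $E(U_1^2)\lesssim N^{2H-2}+N^{4H-3}$. Your argument for $H\ge 1/2$ is more structural: you exploit that the increment covariances are nonnegative and that the $a_k$ are monotone with $\|a\|_\infty=O(N^{-1})$, $\|a\|_1=1$, together with the telescoping identity $\sum_{|m|<N}\rho^H_m=h^{2H}\bigl(N^{2H}-(N-1)^{2H}\bigr)=O(N^{-1})$, to get $\E[R_N^2]=O(N^{4H-3})$ in one line. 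This is cleaner and isolates exactly why $H=3/4$ is the threshold. For $H<1/2$ you also handle the non-summable tail more carefully than the paper, whose bound $\sum_{k}(k+1)^{4H-2}\le N^{4H-1}$ in Lemma~\ref{lem2} is only valid for $H>1/4$; your case split $H\lessgtr 1/4$ patches that. Either route yields the same rates and the same conclusion.
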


\begin{lemma}\label{lem1} Under the assumptions of theorem \ref{thm1}, \begin{equation}
\frac{\sigma^3}{\sqrt{N}h^{2H}} \sum^{N-1}_{k=0}\left\{\left(B_{(k+1)h}^H- B_{kh}^H \right)\left(\{(k+1)h\}^{2H}-\{kh\}^{2H}\right) \right\}\stackrel{L^2}{\longrightarrow} 0 \label{cl2}
\end{equation}\end{lemma}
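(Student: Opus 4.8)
The plan rests on the observation that the left-hand side of (\ref{cl2}) is a centered Gaussian random variable: it is a deterministic linear combination of the jointly Gaussian fBm increments $\Delta_k:=B^H_{(k+1)h}-B^H_{kh}$, which have mean zero, so its $L^2$-norm coincides with its standard deviation. Hence it suffices to show that its variance tends to $0$. Writing $a_k:=\{(k+1)h\}^{2H}-\{kh\}^{2H}\ge0$ and letting $\rho^H_n:=\mathrm{E}[\Delta_k\Delta_{k+n}]$ denote the stationary fBm increment covariance recalled in Section \ref{sec2} (so $\rho^H_0=h^{2H}$ and $\rho^H_n=\tfrac12 h^{2H}[(n+1)^{2H}+(n-1)^{2H}-2n^{2H}]$ for $n\ge1$), the variance equals
\begin{equation*}
V_N=\frac{\sigma^6}{N h^{4H}}\sum_{k,l=0}^{N-1}a_k a_l\,\rho^H_{|k-l|}.
\end{equation*}

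First I would record two elementary facts about the deterministic weights, with $h=1/N$. By telescoping, $\sum_{k=0}^{N-1}a_k=(Nh)^{2H}=1$. Moreover $\sum_{k=0}^{N-1}a_k^2\le\big(\sum_k a_k\big)\big(\max_k a_k\big)=\max_k a_k$, and since $a_k=h^{2H}[(k+1)^{2H}-k^{2H}]$ an elementary bound gives $\max_k a_k=a_0=h^{2H}=N^{-2H}$ when $H\le\tfrac12$ (the $a_k$ decrease, $x\mapsto x^{2H}$ being concave) and $\max_k a_k=a_{N-1}\le 2H\,h^{2H}N^{2H-1}=2H\,N^{-1}$ when $H>\tfrac12$ (the $a_k$ increase, by the mean value theorem). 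In all cases $\sum_k a_k^2=O\!\big(N^{-\min(2H,1)}\big)\to0$.

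For $H\le\tfrac12$ the off-diagonal covariances $\rho^H_n$, $n\ge1$, are $\le0$ (second difference of the concave function $x\mapsto x^{2H}$), so $V_N\le\frac{\sigma^6}{Nh^{4H}}\,\rho^H_0\sum_k a_k^2=\frac{\sigma^6}{Nh^{2H}}\sum_k a_k^2=O(N^{-1})\to0$. The genuine case is $H\in(\tfrac12,\tfrac34)$, where the increments are positively correlated with non-summable covariances. There I would use the convolution bound
\begin{equation*}
\sum_{k,l=0}^{N-1}a_k a_l\,\rho^H_{|k-l|}=\sum_{|n|<N}\rho^H_{|n|}\sum_{k}a_k a_{k+n}\ \le\ \Big(\sum_k a_k^2\Big)\sum_{|n|<N}\rho^H_{|n|}
\end{equation*}
(Cauchy--Schwarz in the inner sum; equivalently $\mathbf a^{\!\top}C\mathbf a\le\|C\|_{\mathrm{op}}\,\|\mathbf a\|^2$ for the Toeplitz covariance matrix $C$ of $(\Delta_0,\dots,\Delta_{N-1})$), then the asymptotic $\rho^H_n\asymp H(2H-1)\,h^{2H}\,n^{2H-2}$ to get $\sum_{|n|<N}\rho^H_{|n|}=O\!\big(h^{2H}N^{2H-1}\big)$, and finally $\sum_k a_k^2=O(N^{-1})$ from the previous paragraph. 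Substituting,
\begin{equation*}
V_N=O\!\Big(\frac{1}{N h^{4H}}\cdot N^{-1}\cdot h^{2H}N^{2H-1}\Big)=O\!\big(h^{-2H}N^{2H-3}\big)=O\!\big(N^{4H-3}\big)\longrightarrow0,
\end{equation*}
the last convergence holding precisely because $H<\tfrac34$; combining the two ranges yields (\ref{cl2}).

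The one delicate point is exactly the regime $H\in(\tfrac12,\tfrac34)$. There the fBm increments are positively correlated with covariances of order $n^{2H-2}$, which are not summable, so the naive bound $|\rho^H_n|\le h^{2H}$ (valid by Cauchy--Schwarz) combined with $\big(\sum_k a_k\big)^2=1$ only gives $V_N=O(N^{2H-1})$, which diverges. One genuinely needs both the $N^{2H-1}$ growth of $\sum_{|n|<N}|\rho^H_n|$ (equivalently of $\|C\|_{\mathrm{op}}$) and the sharp order $N^{-1}$ of $\sum_k a_k^2$; balancing the resulting exponents is what forces $H<\tfrac34$ and is the only place that hypothesis enters. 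Everything else is routine estimation.
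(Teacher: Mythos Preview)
Your proof is correct and reaches the same conclusion as the paper, but the organization is genuinely different. Both arguments start from the same observation---the expression is a centered Gaussian, so it suffices to bound its variance $V_N=\frac{\sigma^6}{Nh^{4H}}\sum_{k,l}a_k a_l\,\rho^H_{|k-l|}$---but diverge from there. The paper expands the double sum directly, separating the contributions $|k-j|\le 1$ from $|k-j|>1$, applies mean-value/Taylor bounds to $a_k$ and to the second difference $\rho^H_{|k-j|}$, and then estimates the resulting triple sum $T_1=\sum_{|k-j|>1}(k+1)^{2H-1}(j+1)^{2H-1}|k-j|^{2H-2}$ by brute force to obtain $E(U_1^2)\lesssim N^{2H-2}+N^{4H-3}$, uniformly over $H\in(0,3/4)$. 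You instead split on the sign of the increment correlations: for $H\le\tfrac12$ the off-diagonal $\rho^H_n$ are nonpositive and can simply be discarded, giving $V_N=O(N^{-1})$; for $H\in(\tfrac12,\tfrac34)$ you exploit the Toeplitz structure via Cauchy--Schwarz (equivalently the operator-norm bound $\mathbf a^\top C\mathbf a\le\|C\|_{\mathrm{op}}\|\mathbf a\|^2$) together with the sharp estimates $\sum_k a_k^2=O(N^{-1})$ and $\sum_{|n|<N}\rho^H_{|n|}=O(h^{2H}N^{2H-1})$, again arriving at $V_N=O(N^{4H-3})$. Your route is cleaner and makes the role of the hypothesis $H<\tfrac34$ very transparent (it is exactly the balance between the growth of $\|C\|_{\mathrm{op}}$ and the decay of $\|\mathbf a\|^2$); the paper's route avoids the case split and handles all $H\in(0,3/4)$ in one pass, at the cost of a more laborious term-by-term estimate.
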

\begin{proof}
As $\disp h=\frac{1}{N}$ l.h.s of (\ref{cl2}) is
\begin{equation}
U_1=\sigma^3 \disp N^{2H-\frac{1}{2}}\sum^{N-1}_{k=0}(B_{\frac{k+1}{N}}^H- B_{\frac{k}{N}}^H )((\frac{k+1}{N})^{2H}-(\frac{k}{N})^{2H}) \label{cl22}
\end{equation}
Now we have $E(B^H_{\frac{k+1}{N}}-B^H_{\frac{k}{N}})^2=\disp\frac{1}{N^{2H}}$,
$$E(B^H_{\frac{k+1}{N}}-B^H_{\frac{k}{N}})(B^H_{\frac{k+2}{N}}-B^H_{\frac{k+1}{N}})=\frac{2^{2H}-1}{N^{2H}}\sim \frac{1}{N^{2H}}$$
$$E(B^H_{\frac{k+1}{N}}-B^H_{\frac{k}{N}})(B^H_{\frac{j+1}{N}}-B^H_{\frac{j}{N}})=\frac{1}{2}\Big[|\frac{k-j+1}{N}|^{2H}+|\frac{k-j-1}{N}|^{2H}-2|\frac{k-j}{N}|^{2H} \Big] \mbox{for}\  |k-j|>1.$$ Then second moment of (\ref{cl22}) becomes
\begin{eqnarray*}
&&\sigma^6 \disp N^{4H-1}\sum^{N-1}_{k=0}\sum^{N-1}_{j=0}((\frac{k+1}{N})^{2H}-(\frac{k}{N})^{2H})\big((\frac{j+1}{N})^{2H}-(\frac{j}{N})^{2H}\big)E(B_{\frac{k+1}{N}}^H- B_{\frac{k}{N}}^H )(B_{\frac{j+1}{N}}^H- B_{\frac{j}{N}}^H )\\
&=&\frac{\sigma^6}{N} \mathop{\disp\sum_{k=0}^{N-1}\sum_{j=0}^{N-1}}_{k=j, |k-j|=1}(2H(k+1)^{2H-1})(2H(j+1)^{2H-1}) \times [\frac{1}{N^{2H}}]\\
&&\qquad +\frac{\sigma^6}{N} \mathop{\disp\sum_{k=0}^{N-1}\sum_{j=0}^{N-1}}_{|k-j|>1} (2H(k+1)^{2H-1}) (2H(j+1)^{2H-1})\\
&& \qquad \qquad \qquad \qquad \times\frac{1}{2}\Big[|\frac{k-j+1}{N}|^{2H}+|\frac{k-j-1}{N}|^{2H}-2|\frac{k-j}{N}|^{2H} \Big]
\end{eqnarray*}
\begin{eqnarray*}
&\le & \frac{\sigma^6}{N^{2H+1}} 4H^2[N^{4H-1}+2N^{4H-1}]+\frac{\sigma^6}{N}\mathop{\disp\sum_{k=0}^{N-1}\sum_{j=0}^{N-1}}_{|k-j|>1}(2H(k+1)^{2H-1}) (2H(j+1)^{2H-1})\\
&& \qquad \qquad \qquad \qquad \qquad \qquad \qquad \qquad \times \Big[2\frac{1}{N^2}2H(2H-1)|\frac{k-j}{N}|^{2H-2}\Big]\\
&\le & 12\sigma^6 H^2 [N^{2H-2}]+\frac{16\sigma^6 H^3(2H-1)}{N^{2H+1}}\mathop{\disp\sum_{k=0}^{N-1}\sum_{j=0}^{N-1}}_{|k-j|>1}(k+1)^{2H-1}(j+1)^{2H-1}|k-j|^{2H-2}\\
&=&12\sigma^6 H^2 [N^{2H-2}]+\frac{16\sigma^6 H^3(2H-1)}{N^{2H+1}} T_1
\end{eqnarray*}
Now, \begin{eqnarray*}
T_1&=& \sum_{k=1}^{N-1}k^{2H-2}\sum_{j=1}^{N-k}(2H j^{2H-1})(2H (k+j)^{2H-1})\\
&\le & \sum_{k=1}^{N-1}k^{2H-2}\sum_{j=1}^{N-k}(2H(k+j)^{2H-1})^2\\
&=& \sum_{k=1}^{N-1}k^{2H-2}\sum_{j=k+1}^{N}(2H j^{2H-1})^2\\
&\le & \sum_{k=1}^{N-1}k^{2H-2}\sum_{j=1}^{N}(2H j^{2H-1})^2\\
&\le & \sum_{k=1}^{N-1}k^{2H-2} (4H^2 N^{4H-1})\\
&\le & N^{2H-1}4H^2 N^{4H-1}\\
&=& 4H^2 N^{6H-2}
\end{eqnarray*}

So, we get $E(U_1^2)<12\sigma^6H^2N^{2H-2}+16\sigma^6 H^3(2H-1)N^{4H-3}\rightarrow 0$ as $N\rightarrow\infty$ if $H<\frac{3}{4}$. Hence the result.
\end{proof}

\begin{lemma}\label{lem2} Under the assumptions of theorem \ref{thm1},
\begin{equation}\frac{\sigma^4}{4\sqrt{N}h^{2H}}\sum^{N-1}_{k=0}\left(\{(k+1)h\}^{2H}-\{kh\}^{2H}\right)^2\rightarrow 0 \label{cp}
\end{equation}\end{lemma}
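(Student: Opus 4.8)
The quantity on the left of (\ref{cp}) is purely deterministic, so the plan is to reduce it to a clean estimate on powers of $N$ and then bound a sum of squared increments of $k\mapsto k^{2H}$. Using $h=\frac1N$ we have $\{(k+1)h\}^{2H}-\{kh\}^{2H}=N^{-2H}a_k$, where $a_k:=(k+1)^{2H}-k^{2H}\ge 0$, and $h^{2H}=N^{-2H}$. Substituting, the left side of (\ref{cp}) equals
\begin{equation*}
\frac{\sigma^4}{4}\,N^{-1/2-2H}\sum_{k=0}^{N-1}a_k^2 ,
\end{equation*}
so it suffices to show this tends to $0$ as $N\to\infty$.

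The key observation I would use is that the $a_k$ are nonnegative and telescope: $\sum_{k=0}^{N-1}a_k=N^{2H}$. Hence
\begin{equation*}
\sum_{k=0}^{N-1}a_k^2\;\le\;\Big(\max_{0\le k\le N-1}a_k\Big)\sum_{k=0}^{N-1}a_k\;=\;\Big(\max_{0\le k\le N-1}a_k\Big)\,N^{2H},
\end{equation*}
and therefore the left side of (\ref{cp}) is bounded by $\dfrac{\sigma^4}{4}\,N^{-1/2}\,\max_{0\le k\le N-1}a_k$. Everything then comes down to controlling the largest increment of the sequence $k\mapsto k^{2H}$ on $\{0,\dots,N\}$.

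At this point I would split on the sign of $2H-1$. For $H\le\frac12$ the map $x\mapsto x^{2H}$ is concave, so $a_k$ is nonincreasing in $k$ and $\max_k a_k=a_0=1$; the bound then reads $\frac{\sigma^4}{4}N^{-1/2}\to 0$. For $\frac12<H<\frac34$ the map is convex, so $a_k$ is nondecreasing and $\max_k a_k=a_{N-1}=N^{2H}-(N-1)^{2H}$; by the mean value theorem this equals $2H\xi^{2H-1}$ for some $\xi\in(N-1,N)$, hence is at most $2HN^{2H-1}$ since $2H-1>0$. The bound then becomes $\frac{H\sigma^4}{2}N^{2H-3/2}$, which tends to $0$ precisely because $H<\frac34$.

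In short, the whole argument is a two-line deterministic estimate resting on the telescoping identity $\sum_k a_k=N^{2H}$ together with $\sum_k a_k^2\le(\max_k a_k)\sum_k a_k$. There is no genuine obstacle; the only points requiring care are that the maximal increment behaves differently for $H$ below and above $\frac12$ (concave versus convex $x^{2H}$), and that the hypothesis $H<\frac34$ is exactly what is needed in the convex regime, where the rate is $N^{2H-3/2}$.
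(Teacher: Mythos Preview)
Your argument is correct. The telescoping identity $\sum_{k=0}^{N-1}a_k=N^{2H}$ together with $\sum_k a_k^2\le(\max_k a_k)\sum_k a_k$ reduces the problem cleanly to controlling $\max_k a_k$, and your case split on the concavity/convexity of $x\mapsto x^{2H}$ is the right way to handle the full range $H\in(0,3/4)$.

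The paper's proof is in the same elementary spirit but uses a slightly different decomposition: it bounds each increment directly by the mean value theorem, writing $a_k\le 2H(k+1)^{2H-1}$, squares this, and then estimates $\sum_{k=0}^{N-1}(k+1)^{4H-2}\le N^{4H-1}$ to obtain the same rate $N^{2H-3/2}$. Your route via $\sum a_k^2\le(\max_k a_k)\,N^{2H}$ is a bit more economical, and in fact more careful: the paper's pointwise bound $a_k\le 2H(k+1)^{2H-1}$ is only valid when $2H-1\ge 0$ (for $H<\tfrac12$ the correct MVT bound uses $k^{2H-1}$, and the $k=0$ term must be treated separately), whereas your concave/convex split handles both regimes cleanly and even yields the sharper rate $N^{-1/2}$ when $H\le\tfrac12$. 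Either way the conclusion is the same, and the hypothesis $H<\tfrac34$ enters exactly where you identify it, in the convex regime.
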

\begin{proof}
Again putting $\disp h=\frac{1}{N}$ l.h.s of (\ref{cp}) is $\disp U_2=\frac{\sigma^4}{4} N^{2H-\frac{1}{2}}\sum^{N-1}_{k=0}(|\frac{k+1}{N}|^{2H}-|\frac{k}{N}|^{2H})^2$
\begin{eqnarray*}
U_2&\le & \disp \frac{\sigma^4}{4} N^{-2H-\frac{1}{2}}\sum^{N-1}_{k=0}4H^2[(k+1)^{2H-1}]^2\le \disp \frac{\sigma^4}{4} N^{-2H-\frac{1}{2}}4H^2 N^{4H-1}=\sigma^4H^2N^{2H-\frac{3}{2}}
\end{eqnarray*}
So, $U_2\rightarrow 0$ as $n\rightarrow\infty$ if $2H-\frac{3}{2}<0$ that is $H<\frac{3}{4}$.
Hence the result.
\end{proof}
We note that $\disp \sigma^2_{H,2}=2\sigma^4\lim_{N\rightarrow\infty}(1+2(1-\frac{1}{N})(2^{2H}-1)^2+\sum_{k=2}^N(1-\frac{k}{N})[(k+1)^{2H}+(k-1)^{2H}-2k^{2H}]^2)$.
\subsection{Simulation Studies for fixed $\sigma$}\label{sec:Sim1}
In this section we present the simulation result for FBM driven model and it's estimator. We use somebm packages from R to simulate fractional Brownian motion. We keep drift parameter $\mu=0$. We generate each sample paths with $N=1000$ points and replicate with replication number, say $r=$, 200 times to find the mean, variance and mean squared error. We repeat the simulation for different values of $\sigma^2$ and for $\disp H\in (\frac{1}{2},\frac{3}{4})$. Simulation shows that estimators are excellent when $\disp H>\frac{1}{2}$ and $\disp H<\frac{3}{4}$ for high frequency data with 1000 values in the time interval 0 to 1. 

\begin{table}
\caption{The MEAN, VAR, MSE, ASYV of the estimators when $\sigma^2$= 0.4}
\begin{center}
\begin{tabular}{|c|c|c|c|} \hline
  \multicolumn{4}{|c|}{$\sigma^2$=0.4} \\  \hline
     H & 0.55   & 0.65   & 0.74 \\
   MEAN &  0.399936  & 0.3986239   & 0.4016842   \\
   VAR & 0.0002994642	&   0.00044069611 &  0.0007573279  	  \\ 
   MSE & 0.0002994683	&   0.0004088546 &   0.0007601643	\\ \hline
\end{tabular}
\end{center}

\caption{The MEAN, VAR, MSE, ASYV of the estimators when $\sigma^2$= 1.6}
\begin{center}
\begin{tabular}{|c|c|c|c|} \hline
  \multicolumn{4}{|c|}{$\sigma^2$=1.6} \\  \hline
     H & 0.55   & 0.65   & 0.74 \\
   MEAN & 1.601847   &  1.605567  &  1.607953  \\
   VAR & 0.00456971	&  0.008053313  &  0.01498881 	  \\ 
   MSE & 0.004573123	&  0.008084309    & 0.01505207	  \\ \hline
\end{tabular}
\end{center}

\caption{The MEAN, VAR, MSE, ASYV of the estimators when $\sigma^2$= 6.4}
\begin{center}
\begin{tabular}{|c|c|c|c|} \hline
  \multicolumn{4}{|c|}{$\sigma^2$=6.4} \\  \hline
     H & 0.55   & 0.65   & 0.74  \\
   MEAN &  6.401082  & 6.450209   &   6.720847 \\
   VAR & 0.08589091	&   0.1253994 &   0.4416412 	  \\ 
   MSE & 0.08589208	& 0.1279204   &  0.5445839 	  \\ \hline
\end{tabular}
\end{center}
\end{table}
\subsection{Time varying $\sigma_t$}\label{sec:tv}
In this section we will prove the result for long memory process only, i.e. $\disp H>\frac{1}{2}$. Our parameter of interest would be $\disp \theta=\int_{\mathbb{R}}(M(\sigma_s\chi_{[0,1]}))^2 ds$.

To get the properties of time varying volatility estimator we need some Mathematical foundation. Readers are referred to Appendix for background for time varying volatility estimator before starting of this section.
We denote
\begin{equation*}\disp \eta_k:=\int_\mathbb{R}\sigma_s\chi_{[\frac{k}{N},\frac{k+1}{N}]}dB^H_s=\int_{\mathbb{R}}f_k(s)dB_s^H=I_1(f_k)
\end{equation*} 
where $f_k(s)=\sigma_s\chi_{[\frac{k}{N},\frac{k+1}{N}]}(s)$ and $I_1$ is Wiener integral with respect to fBm $B^H_t$ so $\eta_k$ is same as the Wiener integral discussed before.  
Define
\begin{equation}
X_N=\sqrt{N}\left(\sum^{N-1}_{k=0}h^{-2H+1}\eta_k^2-\tilde{\theta}_N\right) \label{clt}
\end{equation}
where $\disp \tilde{\theta}_N=\sum^{N-1}_{k=0}h^{-2H+1}E(\eta_k^2)$.

\begin{theorem}\label{thm2} Assume that the stock price follows the diffusion model specified by equation (\ref{eqn3}) with no drift $(\mu=0)$ and time varying volatility $\sigma_t$. Also assume that $H\in (1/2,3/4)$. $N\rightarrow\infty$ with the observation interval $Nh=T$ remaining constant. Without loss of generality we can assume $T=1$.
Then, \begin{equation}\label{eqn13}X_N\Longrightarrow \mathcal{N}(0,\sigma^2_{H,2,*}) 
\end{equation} where $\sigma^2_{H,2,*}$ can be computed explicitly given the form of $\sigma(t)$ with the formula $$\sigma^2_{H,2,*}=\disp\lim_{N\rightarrow \infty}N^{4H-1}2\sum^{N-1}_{k=0}\sum^{N-1}_{k'=0} (\int_{\mathbb{R}}Mf_k(s)Mf_{k'}(s)ds)^2$$. 
\end{theorem}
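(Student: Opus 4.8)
The plan is to identify $X_N$ with a sequence of double Wiener--It\^o integrals and to invoke the fourth moment theorem of Nualart and Peccati in its Malliavin form. First I would move from fractional to ordinary Gaussian noise: with $g_k:=Mf_k\in L^2(\mathbb R)$ one has $\eta_k=\int_{\mathbb R}f_k\,dB^H=\int_{\mathbb R}g_k\,dB=:I_1(g_k)$, so the product formula for multiple integrals gives $\eta_k^2-\mathrm E(\eta_k^2)=I_2(g_k\otimes g_k)$ together with $\mathrm E(\eta_k^2)=\|g_k\|_{L^2}^2=\int_{\mathbb R}(Mf_k(s))^2ds$. Since $h=1/N$, this yields $X_N=I_2(f_N)$ with the symmetric kernel $f_N:=N^{2H-1/2}\sum_{k=0}^{N-1}g_k\otimes g_k\in L^2(\mathbb R^2)$, and by the It\^o isometry $\mathrm E(X_N^2)=2\|f_N\|_{L^2(\mathbb R^2)}^2=2N^{4H-1}\sum_{k,k'=0}^{N-1}\langle g_k,g_{k'}\rangle_{L^2}^2=2N^{4H-1}\sum_{k,k'=0}^{N-1}\big(\int_{\mathbb R}Mf_k(s)Mf_{k'}(s)ds\big)^2$, which is exactly the quantity whose limit is declared to be $\sigma^2_{H,2,*}$.

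The analytic core is a two-sided estimate for $\rho_N(k,k'):=\langle g_k,g_{k'}\rangle_{L^2}=\langle f_k,f_{k'}\rangle_{L^2_H}$. For $H\in(1/2,1)$ one has $\langle f_k,f_{k'}\rangle_{L^2_H}=H(2H-1)\int_{k/N}^{(k+1)/N}\int_{k'/N}^{(k'+1)/N}\sigma_u\sigma_v|u-v|^{2H-2}du\,dv$; assuming, as we may, that $\sigma$ is bounded above and below away from $0$, I would show that $c\,N^{-2H}\psi(|k-k'|)\le\rho_N(k,k')\le C\,N^{-2H}\psi(|k-k'|)$, where $\psi(0)=\psi(1)=1$ and $\psi(n)\asymp n^{2H-2}$ for $n\ge2$, with $c,C$ depending only on $H$ and on $\sup|\sigma|,\inf|\sigma|$. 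Because $\sum_{n\ge0}\psi(n)^2<\infty$ precisely when $H<3/4$, this already gives $\mathrm E(X_N^2)\le 2C^2N^{4H-1}\cdot N^{-4H}\cdot N\sum_{n\in\mathbb Z}\psi(|n|)^2=O(1)$, and a Riemann-sum argument applied to the same double integral (isolating the near-diagonal cells $|k-k'|\le1$ and dominating the remaining cells by the summable sequence $\psi(\cdot)^2$) shows that $\sigma^2_{H,2,*}=\lim_N\mathrm E(X_N^2)$ exists, is finite, and is strictly positive.

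By the fourth moment theorem for sequences in the second chaos, it then remains to prove that the single contraction vanishes, $\|f_N\otimes_1 f_N\|_{L^2(\mathbb R^2)}\to0$. A direct computation gives $f_N\otimes_1 f_N=N^{4H-1}\sum_{k,k'}\rho_N(k,k')\,g_k\otimes g_{k'}$, whence $\|f_N\otimes_1 f_N\|_{L^2(\mathbb R^2)}^2=N^{8H-2}\sum_{k,k',l,l'}\rho_N(k,k')\rho_N(l,l')\rho_N(k,l)\rho_N(k',l')\le C^4N^{-2}\sum_{k,k',l,l'}\psi(|k-k'|)\psi(|k'-l'|)\psi(|l'-l|)\psi(|l-k|)$. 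Summing out one free index costs a factor $N$, so the task reduces to proving $N^{-1}\sum_{|x|,|y|,|z|<N}\psi(|x|)\psi(|y|)\psi(|z|)\psi(|x+y+z|)\to0$, and by a convolution estimate (Cauchy--Schwarz after substituting $w=x+y$) this is controlled by $N^{-1}\sum_{|w|<2N}\big((\psi\ast\psi)(w)\big)^2$. For $H\in(1/2,3/4)$ one has $(\psi\ast\psi)(w)\asymp|w|^{4H-3}$, so $\sum_{|w|<2N}((\psi\ast\psi)(w))^2=O\!\big(N^{\max(0,\,8H-5)}\big)$ (a logarithmic factor appears when $H=5/8$), and since $\max(0,8H-5)<1$ precisely for $H<3/4$ the whole expression is $O\!\big(N^{\max(0,8H-5)-1}\big)=o(1)$. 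The Nualart--Peccati criterion now yields $X_N\Longrightarrow\mathcal N(0,\sigma^2_{H,2,*})$.

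I expect the real work to sit in two places. The first is the two-sided bound on $\rho_N(k,k')$: because $\sigma$ is variable rather than constant one cannot simply quote the increment correlations of fractional Brownian motion, and one must control the $L^2_H$ inner product (equivalently the operator $M$) against a nonconstant weight, both on the near-diagonal cells and in the $|k-k'|^{2H-2}$ regime. The second is the convolution-tail estimate $\sum_{|w|<2N}((\psi\ast\psi)(w))^2=o(N)$, which is exactly where the hypothesis $H<3/4$ enters, mirroring the restriction in the constant-volatility case (Theorem \ref{thm1}) and in Nourdin's (2008) central limit theorem for quadratic variations. The remaining ingredients --- the product formula, the isometry, and the identity for $f_N\otimes_1 f_N$ --- are routine chaos computations.
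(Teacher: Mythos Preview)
Your proposal is correct and follows essentially the same architecture as the paper: represent $X_N$ as an element of the second Wiener chaos and apply the Nualart--Peccati fourth moment theorem. The differences are in implementation rather than idea. The paper works in the abstract Hilbert space $\mathcal{H}$ associated with $B^H$ instead of transferring to $L^2(\mathbb{R})$ via $g_k=Mf_k$, and it verifies criterion (iv) of the theorem (showing $\|DG_N\|^2_{\mathcal{H}}\to 2$ in $L^2$, where $G_N=X_N/\sqrt{\mathrm E X_N^2}$) rather than your criterion (iii) (vanishing of the contraction $f_N\otimes_1 f_N$); these are of course equivalent, and the paper's computation of $\mathrm{Var}\,\|DY_N\|^2_{\mathcal{H}}$ unwinds to exactly the same fourfold sum $\sum_{k,k',l,l'}\rho_N(k,k')\rho_N(l,l')\rho_N(k,l)\rho_N(k',l')$ that you bound. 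The paper obtains the final rate $O(N^{8H-6})$ by citing a lemma of Tudor, whereas you give the explicit convolution argument leading to $O(N^{\max(0,8H-5)-1})$. Your two-sided estimate on $\rho_N$ (under the additional assumption $\inf\sigma>0$) buys you strict positivity of $\sigma^2_{H,2,*}$, and your Riemann-sum remark on the existence of the limit is more than the paper provides, since the paper simply writes $\sigma^2_{H,2,*}$ as a limit without proving convergence.
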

\begin{proof}
r.h.s. of \ref{clt} can be rewritten as $\disp X_N=\sqrt{N}\left(N^{2H-1}\sum^{N-1}_{k=0}\eta_k^2- \tilde{\theta}_N\right)$.

Let us introduce some notations.

Let us denote $E(\eta_k^2)=\theta_k$. We get $\disp \theta_k=\int_\mathbb{R}(M(\sigma_{s}\chi_{[\frac{k}{N},\frac{k+1}{N}]}))^2ds$ as expectation of Wiener integral $\disp \int_\mathbb{R}\sigma_s\chi_{[\frac{k}{N},\frac{k+1}{N}]}dB^H_s$.
Now $$E\disp (N^{2H-1}\sum^{N-1}_{k=0}\eta_k^2)=N^{2H-1}\sum^{N-1}_{k=0}\theta_k=N^{2H-1}\sum^{N-1}_{k=0}\int_\mathbb{R}(M(\sigma_{s}\chi_{[\frac{k}{N},\frac{k+1}{N}]}))^2ds$$
$$\disp N^{2H-1}\sum^{N-1}_{k=0}\int_\mathbb{R}(M(\sigma_{s}\chi_{[\frac{k}{N},\frac{k+1}{N}]}))^2ds=\tilde{\theta}_N $$%

If $H>1/2$ and $\sigma(s)<\Sigma \ \forall \ s$ then
\begin{eqnarray*}
\int_{\mathbb{R}}Mf_k(s)Mf_j(s)ds &\le & \Sigma^2H(2H-1)\frac{|k-j|^{2H-2}}{N^{2H}}\ \mbox{for} \ |k-j|>1\\
&\le & \Sigma^2 \frac{1}{2N^{2H}}c \ \mbox{for} \ |k-j|=1, \ c=(2^{2H}-1)\  \mbox{a constant}\\
&\le & \frac{\Sigma^2}{N^{2H}}\ \mbox{for} \ k=j
\end{eqnarray*}

Using product formula for Wiener chaos integrals (\ref{tp}) we get
\begin{eqnarray*}
\eta_k^2&=&I_1^2(f_k(s))\\
&=&I_2(f_k\otimes_0 f_k)+I_0(f_k\otimes_1 f_k)\\
&=&I_2(f_k\otimes_0 f_k)+\int_\mathbb{R}(M(\sigma_{s}\chi_{[\frac{k}{N},\frac{k+1}{N}]}))^2ds
\end{eqnarray*}
$$ \mbox{and} \ \eta_k^2-\theta_k=I_2(f_k\otimes_0 f_k)$$
We note that $I_0(f_k\otimes_1 f_k)=\langle f_k,f_k\rangle_{\mathcal{H}}=\int_\mathbb{R}(M(\sigma_{s}\chi_{[\frac{k}{N},\frac{k+1}{N}]}))^2ds$.

Then $\disp E(\eta_k^2)=EI_2(f_k\otimes_0 f_k)+\int_\mathbb{R}(M(\sigma_{s}\chi_{[\frac{k}{N},\frac{k+1}{N}]}))^2ds$. So, we get $\disp EI_2(f_k\otimes_0 f_k)=0$.

Let us now calculate the second moment of $\disp \sum^{N-1}_{k=0}\eta_k^2$.

\begin{eqnarray*}
\mathrm{E}\left(\sum^{N-1}_{k=0}\eta_k^2\right)^2&=&\sum^{N-1}_{k=0}\sum^{N-1}_{k'=0}\mathrm{E}\left((I_2(f_k\otimes_0 f_k)+\theta_k)(I_2(f_{k'}\otimes_0 f_{k'})+\theta_k)\right)\\
&=&\sum^{N-1}_{k=0}\sum^{N-1}_{k'=0}\left[\mathrm{E}(I_2(f_k\otimes_0 f_k)I_2(f_{k'}\otimes_0 f_{k'}))\right.\\
&&\qquad  +\theta_k \mathrm{E}I_2(f_{k'}\otimes_0 f_{k'})+ \theta_k' \mathrm{E}I_2(f_k\otimes_0 f_k)+\theta_k \theta_k']\\
&=&A_1+A_2+A_3+A_4
\end{eqnarray*}
Where $A_1, A_2, A_3, A_4$ are respective terms in the summation. Now $A_2=A_3=0$. We observe that 
Var$\disp \left(\sum^{N-1}_{k=0}\eta_k^2\right)=A_1$.

\begin{eqnarray*}A_1&=&2\sum^{N-1}_{k=0}\sum^{N-1}_{k'=0}\left< f_k\otimes_0 f_k,f_{k'}\otimes_0 f_{k'}\right>_{\mathcal{H}^{\otimes 2}}\\
&=&2\sum^{N-1}_{k=0}\sum^{N-1}_{k'=0} \left< f_k,f_{k'}\right>^2_{\mathcal{H}^{\otimes 1}}\\
&=&2\sum^{N-1}_{k=0}\sum^{N-1}_{k'=0} (\mathrm{E}(I_1(f_k)I_1(f_{k'}))^2\\
&=&2
\sum^{N-1}_{k=0}\sum^{N-1}_{k'=0}(\mathrm{E}(\eta_k\eta_{k'}))^2\\
&=& 2\sum^{N-1}_{k=0}\sum^{N-1}_{k'=0} (\int_{\mathbb{R}}Mf_k(s)Mf_{k'}(s)ds)^2
\end{eqnarray*}

\begin{eqnarray*}
A_1&\approx & 2(N^{1-4H}+\sum^{N-1}_{k=0}\mathop{\sum^{N-1}_{k'=0}}_{k\neq k'}\left(\Sigma^2 H(2H-1)\mid\frac{k'-k}{N}\mid^{2H-2}\frac{1}{N^2}\right)^2 )\\
&=&2(N^{1-4H}+\Sigma^4 H^2(2H-1)^2N^{-4H} \sum^{N-1}_{k=0}\mathop{\sum^{N-1}_{k'=0}}_{k\neq k'}\mid{k'-k}\mid^{4H-4})\\
&=&2(N^{1-4H}+\Sigma^4 H^2(2H-1)^2N^{-4H} \mathop{\sum^{N-1}_{k=-N+1}}_{k\neq 0}\sum^{N-1+k}_{k'=k}\mid k\mid^{4H-4})\\
&=&2N^{1-4H}+4\Sigma^4 H^2(2H-1)^2N^{-4H}(N-1) \sum^{N-1}_{k=1}k^{4H-4}\ \ \mbox{for} \ H>1/2 \end{eqnarray*}
Then $N^{4H-1}A_1<\infty$ if $H<\frac{3}{4}$. So, we can see $\disp N^{4H-1}E \left(\sum^{N-1}_{k=0}\eta_k^2\right)^2= N^{4H-1}A_1$. 

Let us write $X_N$ in terms of multiple Wiener Ito integral.
\begin{eqnarray*}
X_N &=&\sqrt{N}\left(N^{2H-1}\sum_{k=0}^{N-1}(I_1(f_k))^2-\tilde{\theta}_N \right)\\
&=&\sqrt{N}\left(N^{2H-1}\sum_{k=0}^{N-1}(I_2(f_k\otimes_0 f_k)+\theta_k )-\tilde{\theta}_N \right)\\
&=&N^{2H-\frac{1}{2}}I_2(\sum_{k=0}^{N-1}(f_k\otimes_0 f_k))+\sqrt{N}\Big(N^{2H-1}\sum_{k=0}^{N-1}\theta_k-\tilde{\theta}_N\Big)\\
&=& Y_N+\sqrt{N}(N^{2H-1}\sum_{k=0}^{N-1}\theta_k-\tilde{\theta}_N)
\end{eqnarray*}
Now $\disp\sqrt{N}(N^{2H-1}\sum_{k=0}^{N-1}\theta_k-\tilde{\theta}_N)= 0$ for all $N$.
Observe that $E(Y_N^2)=N^{4H-1}A_1=S_N$.
Define $\disp G_N=\frac{Y_N}{\sqrt{S_N}}$.

To prove asymptotic normality we will use the two theorems (\ref{md}) and (\ref{fm}). Using the theorems we want to show that $\|DG_N\|^2_{\mathcal{H}}\rightarrow 2$ in $L^2$. For that matter we first show
$\disp\lim_{N\rightarrow \infty}E[\|DG_N\|^2_{\mathcal{H}}]= 2$ and then $\disp\lim_{N\rightarrow \infty}E[\|DG_N\|^2_{\mathcal{H}}-2]^2=0$. Now,
\begin{eqnarray*}
&&[\|DG_N\|^2_{\mathcal{H}}-2]^2\\
&=&[\|DG_N\|^2_{\mathcal{H}}-E[\|DG_N\|^2_{\mathcal{H}}]+E[\|DG_N\|^2_{\mathcal{H}}]-2]^2\\
&=&[\|DG_N\|^2_{\mathcal{H}}-E[\|DG_N\|^2_{\mathcal{H}}]^2+[E[\|DG_N\|^2_{\mathcal{H}}]-2]^2+2[\|DG_N\|^2_{\mathcal{H}}-E[\|DG_N\|^2_{\mathcal{H}}][E[\|DG_N\|^2_{\mathcal{H}}]-2]\\
&=&A+B+2C
\end{eqnarray*}
where $A, B, C$ are respective terms. Now $EC=0, B\rightarrow 0$ as $ N\rightarrow\infty$. We will be interested in $A$ for further analysis.

Using (\ref{dd}) we get $$ \disp D_{t}Y_N=2 N^{2H-\frac{1}{2}} \sum_{j=0}^{N-1}f_j(t)I_1(f_j).$$ So $$ \|DY_N\|_{\mathcal{H}}^2=\disp 4 N^{4H-1}\sum_{k=0}^{N-1}\sum_{j=0}^{N-1} I_1(f_j)I_1(f_k)\langle f_j,f_k\rangle_{\mathcal{H}}.$$
Now $$E[\|DY_N\|_{\mathcal{H}}^2]=\disp 4N^{4H-1}\sum_{k=0}^{N-1}\sum_{j=0}^{N-1}(\langle f_j,f_k\rangle_{\mathcal{H}})^2=4N^{4H-1} \frac{A_1}{2}=2 S_N$$
We note that $E[\|DY_N\|_{\mathcal{H}}^2]=2E[Y_N^2]$.
So $E[\|DG_N\|_{\mathcal{H}}^2]\rightarrow 2$ as $N\rightarrow\infty$.
Let us calculate the following
\begin{eqnarray*}
&&\|DY_N\|_{\mathcal{H}}^2-E[\|DY_N\|_{\mathcal{H}}^2]\\
&=&4 N^{4H-1}\sum_{k=0}^{N-1}\sum_{j=0}^{N-1}[ (I_2(f_j\otimes_0 f_k)+I_0(f_k\otimes_1 f_k)\langle f_j,f_k\rangle_{\mathcal{H}}-\langle f_j,f_k\rangle_{\mathcal{H}}^2]\\
&=&4 N^{4H-1}\sum_{k=0}^{N-1}\sum_{j=0}^{N-1}[ I_2(f_j\otimes_0 f_k)\langle f_j,f_k\rangle_{\mathcal{H}}]
\end{eqnarray*}
And then \begin{eqnarray*}
E[\|DY_N\|_{\mathcal{H}}^2-E[\|DY_N\|_{\mathcal{H}}^2]]^2&=&16 N ^{8H-2} E[\sum_{k=0}^{N-1}\sum_{j=0}^{N-1}[ I_2(f_j\otimes_0 f_k)\langle f_j,f_k\rangle_{\mathcal{H}}]]^2\\
&\leq & \mbox{constant}\ \ N^{8H-6}\rightarrow \ 0
\end{eqnarray*}
For last part of the calculation see lemma 5.2 of Tudor (2013).
as $ N\rightarrow \infty $ and $H<\frac{3}{4}$. Hence the proof.
\end{proof}

\begin{theorem}\label{thm3} Under the conditions of theorem \ref{thm2}, \begin{equation}\sqrt{N}(\hat{\sigma^2}- \tilde{\theta}_N) \Longrightarrow \mathcal{N}(0,\sigma^2_{H,2,*})\end{equation} where $\sigma^2_{H,2,*}$ is a constant that can be computed explicitly, given the form of $\sigma(t)$.
\begin{proof}   Under the condition of $\mu=0$, the solution (\ref{eqn4}) of the stochastic differential equation (\ref{eqn1}) simplifies to \begin{equation}S_t = S_0 \exp\left(\int^t_0\sigma_sdB^H_s  -\frac{1}{2}  \int_\mathbb{R}(M(\sigma_s\chi_{[0,t]}))^2ds\right).\label{mo}
\end{equation}
Let us denote $ \tilde f_k(s)=\disp \sum_{j=0}^kf_k(s) $ and $\disp\delta_k=\int_{\mathbb{R}}(M\tilde f_{k+1}(s))^2-\int_{\mathbb{R}}(M\tilde f_{k}(s))^2$.
Putting the solution (\ref{mo}) in the definition of $\hat{\sigma^2}$ in equation (\ref{eqn5}), we get,
\begin{eqnarray}\label{eqn12}\hat{\sigma^2}& =&\frac{1}{Nh^{2H}}\sum^{N-1}_{k=0}\left[\eta_k-\frac{1}{2}\delta_k\right]^2\nonumber\\
&=&N^{2H-1}
\left[\sum^{N-1}_{k=0}\eta_k^2-\sum^{N-1}_{k=0}\eta_k \delta_k +\frac{1}{4}\sum^{N-1}_{k=0}\delta_k^2\right]
\end{eqnarray}
Combining (\ref{eqn12}) and (\ref{eqn13}), we have
\begin{eqnarray}\sqrt{N}(\hat{\sigma^2}-\tilde{\theta}_N)&=&X_N
-N^{2H-\frac{1}{2}}\sum^{N-1}_{k=0}\eta_k \delta_k+\frac{1}{4}N^{2H-\frac{1}{2}}\sum^{N-1}_{k=0}\delta_k^2.
\end{eqnarray}
where $X_N$ is defined in theorem \ref{thm2}. It is shown in lemma \ref{lem3} that the second term converges to zero in $L^2$ as $N\rightarrow\infty$. In lemma \ref{lem4} it is shown that the third term converges to zero. The theorem now follows by applying Chebyshev and Slutsky with theorem \ref{thm2} as before.
\end{proof}
\end{theorem}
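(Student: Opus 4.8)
The plan is to follow the same route as in the proof of Theorem~\ref{thm1}: insert the explicit solution~(\ref{mo}) into the estimator~(\ref{eqn5}), split off the term that is exactly $X_N$ of Theorem~\ref{thm2}, and show the two leftover terms are asymptotically negligible. With $f_k(s)=\sigma_s\chi_{[k/N,(k+1)/N]}(s)$, $\eta_k=I_1(f_k)$, $\tilde f_k=\sigma_s\chi_{[0,k/N]}$ and $\delta_k=\int_{\mathbb{R}}(M\tilde f_{k+1})^2ds-\int_{\mathbb{R}}(M\tilde f_k)^2ds$, one has $\log S_{t_{k+1}}-\log S_{t_k}=\eta_k-\tfrac12\delta_k$, hence
\begin{equation*}
\sqrt{N}\,(\hat{\sigma^2}-\tilde\theta_N)=X_N-N^{2H-\frac12}\sum_{k=0}^{N-1}\eta_k\delta_k+\frac14\,N^{2H-\frac12}\sum_{k=0}^{N-1}\delta_k^2 .
\end{equation*}
Theorem~\ref{thm2} gives $X_N\Longrightarrow\mathcal N(0,\sigma^2_{H,2,*})$, so by Slutsky's theorem it suffices to show that the last two terms tend to $0$ in probability; I would establish $L^2$ convergence and then invoke Chebyshev, mirroring Lemmas~\ref{lem1} and~\ref{lem2}.

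The deterministic ingredient is the uniform bound $0\le\delta_k\le C\,k^{2H-1}N^{-2H}$ for $k\ge1$ (and $\delta_0\le C N^{-2H}$). I would obtain it by writing $\delta_k=2\langle\tilde f_k,f_k\rangle_{\mathcal H}+\|f_k\|^2_{\mathcal H}=2\sum_{j=0}^{k-1}\langle f_j,f_k\rangle_{\mathcal H}+\|f_k\|^2_{\mathcal H}$ and inserting the inner-product bounds already recorded in the proof of Theorem~\ref{thm2}, namely $\langle f_j,f_k\rangle_{\mathcal H}\le\Sigma^2H(2H-1)|k-j|^{2H-2}N^{-2H}$ for $|k-j|>1$ and $\le\Sigma^2N^{-2H}$ otherwise, together with $\sum_{m=1}^{k}m^{2H-2}\lesssim k^{2H-1}$, valid since $H>\tfrac12$. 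Granting this, the third term satisfies $N^{2H-\frac12}\sum_k\delta_k^2\le C N^{2H-\frac12}N^{-4H}\sum_{k}k^{4H-2}\le C' N^{2H-\frac32}\to0$ for $H<\tfrac34$, the exact analogue of Lemma~\ref{lem2}.

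For the cross term, since $\delta_k$ is deterministic,
\begin{equation*}
\mathrm E\Big(N^{2H-\frac12}\sum_{k=0}^{N-1}\eta_k\delta_k\Big)^2=N^{4H-1}\sum_{k=0}^{N-1}\sum_{k'=0}^{N-1}\delta_k\delta_{k'}\,\langle f_k,f_{k'}\rangle_{\mathcal H},
\end{equation*}
and I would bound the double sum exactly as $T_1$ was bounded in Lemma~\ref{lem1}: the diagonal and near-diagonal part ($|k-k'|\le1$) consists of $O(N)$ terms each of size $\lesssim N^{-2}\cdot N^{-2H}$, contributing $O(N^{2H-2})$; for $|k-k'|>1$ the bounds $|\delta_k\delta_{k'}|\lesssim k^{2H-1}(k')^{2H-1}N^{-4H}$ and $\langle f_k,f_{k'}\rangle_{\mathcal H}\lesssim|k-k'|^{2H-2}N^{-2H}$ reduce the sum to power sums of order $N^{6H-2}$, so the whole term is $O(N^{4H-3})+O(N^{2H-2})\to0$ precisely when $H<\tfrac34$. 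Combining the three pieces via Slutsky's theorem yields the statement, with limiting variance $\sigma^2_{H,2,*}$ as in Theorem~\ref{thm2}.

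The delicate step — and the reason $H<\tfrac34$ is forced — is the control of $\delta_k$ and of the cross-term double sum: in Theorem~\ref{thm1} one has the clean closed form $\delta_k=((k+1)h)^{2H}-(kh)^{2H}$, whereas here $\delta_k$ is an increment of $t\mapsto\|M(\sigma_\cdot\chi_{[0,t]})\|^2_{L^2}$, so its estimate must be extracted from the mapping properties of $M$ on $L^2_H$ for $H\in(\tfrac12,1)$ and from the covariance bounds of Theorem~\ref{thm2}; one must then keep track of the interplay between the two scales $k^{2H-1}N^{-2H}$ (from $\delta_k$) and $|k-k'|^{2H-2}N^{-2H}$ (from $\langle f_k,f_{k'}\rangle_{\mathcal H}$) when summing, which is what pins down both the rate and the threshold.
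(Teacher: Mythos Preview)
Your proposal is correct and follows essentially the same route as the paper: the identical decomposition $\sqrt{N}(\hat{\sigma^2}-\tilde\theta_N)=X_N-N^{2H-1/2}\sum_k\eta_k\delta_k+\tfrac14 N^{2H-1/2}\sum_k\delta_k^2$, the same $\delta_k\lesssim (k+1)^{2H-1}N^{-2H}$ estimate (the paper obtains it directly from the kernel representation $H(2H-1)\iint\sigma(s)\sigma(t)|s-t|^{2H-2}\,ds\,dt$ rather than via your inner-product expansion, but the two are equivalent for $H>\tfrac12$), and the same disposal of the two remainder terms via Lemmas~\ref{lem3} and~\ref{lem4} followed by Chebyshev and Slutsky. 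Your treatment of the cross-term double sum is in fact more explicit than the paper's, which simply invokes the $\delta_k$ and $\langle f_k,f_{k'}\rangle_{\mathcal H}$ estimates without spelling out the diagonal/off-diagonal split or the final $O(N^{2H-2})+O(N^{4H-3})$ rates.
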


\begin{lemma}\label{lem3} Under the assumptions of theorem \ref{thm2}, \begin{equation}G=N^{2H-\frac{1}{2}}\sum^{N-1}_{k=0}\eta_k \delta_k\stackrel{L^2}{\longrightarrow} 0
\end{equation}
\begin{proof}
Let us recall
\begin{equation*}\mathrm{E}\eta_k\eta_{k'}=\int_{\mathbb{R}}Mf_k(s)Mf_{k'}(s)ds
\end{equation*}
\begin{eqnarray*}
\delta_k&=&\int_{\mathbb{R}}[(M\tilde f_{k+1}(s))^2-(M\tilde f_{k}(s))^2]ds
\end{eqnarray*}
For $\disp H>\frac{1}{2}$ 
\begin{eqnarray*} 
\delta_k&=& H(2H-1)[\int_0^{\frac{k+1}{N}}\int_0^{\frac{k+1}{N}}\sigma(s)\sigma(t)|s-t|^{2H-2}ds dt-\int_0^{\frac{k}{N}}\int_0^{\frac{k}{N}}\sigma(s)\sigma(t)|s-t|^{2H-2}ds dt]\\
&=& H(2H-1)[\int_{\frac{k}{N}}^{\frac{k+1}{N}}\int_{\frac{k}{N}}^{\frac{k+1}{N}}+2\int_0^{\frac{k}{N}}\int_{\frac{k}{N}}^{\frac{k+1}{N}}]
\end{eqnarray*}
So 
\begin{eqnarray*} 
|\delta_k| &\leq & \frac{\Sigma^2}{N^{2H}}[1+(k+1)^{2H}-(k)^{2H}-1]\\
&\approx & \frac{\Sigma^2}{N^{2H}} (k+1)^{2H-1}
\end{eqnarray*}
We look at the $L_2$ norm of $G$.
\begin{eqnarray*}E(G^2)&=&N^{4H-1} \sum^{N-1}_{k=0}\sum^{N-1}_{k'=0}\delta_k\delta_{k'}\int_{\mathbb{R}}Mf_k(s)Mf_{k'}(s)ds\\
&\rightarrow& 0.
 \end{eqnarray*}
using estimates for $\delta_k$ and  estimates of $\disp \int_{\mathbb{R}}Mf_k(s)Mf_{k'}(s)ds$.
\end{proof}
\end{lemma}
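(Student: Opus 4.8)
The plan is to prove $L^{2}$ convergence by estimating $\mathrm{E}(G^{2})$ and showing it tends to $0$. Since the $\delta_k$ are deterministic and each $\eta_k=I_1(f_k)$ is a centered Gaussian variable, squaring and taking expectations reduces the problem to a deterministic double sum:
\begin{equation*}
\mathrm{E}(G^{2})=N^{4H-1}\sum_{k=0}^{N-1}\sum_{k'=0}^{N-1}\delta_k\,\delta_{k'}\,\mathrm{E}(\eta_k\eta_{k'})=N^{4H-1}\sum_{k=0}^{N-1}\sum_{k'=0}^{N-1}\delta_k\,\delta_{k'}\int_{\mathbb{R}}Mf_k(s)\,Mf_{k'}(s)\,ds .
\end{equation*}
For this I would assemble two ingredients. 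First, a bound on $\delta_k$: using $\|Mg\|_{L^{2}}^{2}=H(2H-1)\int_{\mathbb{R}}\int_{\mathbb{R}}g(s)g(t)|s-t|^{2H-2}\,ds\,dt$ for $H>1/2$ (the identity already invoked in the displayed formula for $\delta_k$), applied with $g=\tilde f_{k+1}$ and with $g=\tilde f_{k}$ and subtracted, the common square cancels and $\sigma\le\Sigma$ gives
\begin{equation*}
|\delta_k|\ \le\ \frac{\Sigma^{2}}{N^{2H}}\bigl[(k+1)^{2H}-k^{2H}\bigr]\ \le\ \frac{C\,(k+1)^{2H-1}}{N^{2H}} .
\end{equation*}
Second, the kernel estimates already derived in the proof of Theorem \ref{thm2}, namely $\bigl|\int_{\mathbb{R}}Mf_k\,Mf_{k'}\,ds\bigr|\le C N^{-2H}$ when $|k-k'|\le1$ and $\le C N^{-2H}|k-k'|^{2H-2}$ when $|k-k'|\ge2$, so that uniformly for $k\ne k'$ the bound is $C N^{-2H}|k-k'|^{2H-2}$.

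Then I would substitute these into the double sum, treating the diagonal $k=k'$ separately. On the diagonal, $|\delta_k|^{2}\le C N^{-4H}(k+1)^{4H-2}$, $\mathrm{E}\eta_k^{2}\le C N^{-2H}$, and since $4H-2\in(0,1)$ one has $\sum_{k=1}^{N}k^{4H-2}\le C N^{4H-1}$; the diagonal contribution is therefore at most $C\,N^{4H-1}\cdot N^{-4H}\cdot N^{-2H}\cdot N^{4H-1}=C\,N^{2H-2}$. Off the diagonal, bound one factor crudely by $(k'+1)^{2H-1}\le N^{2H-1}$, then sum the remaining kernel using $\sum_{m=1}^{N}m^{2H-2}\le C N^{2H-1}$ (legitimate because $2H-2\in(-1,0)$) and $\sum_{k=1}^{N}k^{2H-1}\le C N^{2H}$; this gives a bound of order $N^{4H-1}\cdot N^{-6H}\cdot N^{2H-1}\cdot N^{2H}\cdot N^{2H-1}=N^{4H-3}$. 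Altogether $\mathrm{E}(G^{2})\le C\,(N^{2H-2}+N^{4H-3})\to0$ since $H<3/4$, which is the assertion, and $L^{2}$ convergence follows.

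The bookkeeping --- the cancellation of the overlapping double integral in $\delta_k$ and the exact constants in the kernel estimates --- is routine. The one place to be careful is the power counting in the off-diagonal sum: one must retain the genuine decay $|k-k'|^{2H-2}$ rather than replacing it by $1$, since the crude bound costs a factor $N^{2H-1}$ and would impose the spurious restriction $H<1/2$. Keeping that decay, and using that its partial sums grow only like $N^{2H-1}$, is exactly what makes the exponent land at $N^{4H-3}$ and recovers the threshold $H<3/4$ that already appears in Theorem \ref{thm2} and Lemma \ref{lem1}.
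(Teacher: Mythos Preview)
Your proof is correct and follows exactly the approach of the paper: you write $\mathrm{E}(G^{2})$ as the double sum $N^{4H-1}\sum_{k,k'}\delta_k\delta_{k'}\,\mathrm{E}(\eta_k\eta_{k'})$, invoke the bound $|\delta_k|\le\Sigma^{2}N^{-2H}\bigl[(k+1)^{2H}-k^{2H}\bigr]$, and combine it with the covariance estimates for $\int Mf_kMf_{k'}$ already established in Theorem~\ref{thm2}. The paper stops at ``$\to 0$ using estimates for $\delta_k$ and estimates of $\int Mf_kMf_{k'}$'' without carrying out the power counting; your diagonal/off-diagonal split yielding $O(N^{2H-2})+O(N^{4H-3})$ is precisely the omitted computation, and it parallels the bound obtained in Lemma~\ref{lem1} for the constant-$\sigma$ case.
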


\begin{lemma}\label{lem4} Under the assumptions of theorem \ref{thm2}, then  \begin{equation}N^{2H-\frac{1}{2}}\sum^{N-1}_{k=0}\delta_k^2\rightarrow 0
\end{equation}

\begin{proof}
Again using the estimates of $\delta_k$ we have 
\begin{eqnarray*}
N^{2H-\frac{1}{2}}\sum^{N-1}_{k=0}\delta_k^2&\le&N^{2H-\frac{1}{2}}\sum^{N-1}_{k=0}\left(\frac{\Sigma^2}{N^{2H}} (k+1)^{2H-1}\right)^2\\
&=&\Sigma^4 N^{-2H-\frac{1}{2}}N^{4H-1}
\end{eqnarray*}
This proves the lemma for $\disp H>\frac{1}{2}$ as well as $\disp H<\frac{3}{4}$.
\end{proof}\end{lemma}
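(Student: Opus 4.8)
The plan is to reuse the pointwise estimate on $|\delta_k|$ already obtained in the proof of Lemma \ref{lem3} and then simply square and sum. Recall that for $H>1/2$ the inner product attached to the operator $M$ has the kernel representation $\int_\mathbb{R}Mf\,Mg\,ds=H(2H-1)\int_\mathbb{R}\int_\mathbb{R}f(s)g(t)|s-t|^{2H-2}\,ds\,dt$, so that $\int_\mathbb{R}(M\tilde f_k(s))^2\,ds$ equals $H(2H-1)$ times a double integral of $\sigma(s)\sigma(t)|s-t|^{2H-2}$ over a square that grows by one mesh cell each time $k$ increases. Hence $\delta_k=H(2H-1)[\int_0^{(k+1)/N}\int_0^{(k+1)/N}-\int_0^{k/N}\int_0^{k/N}]\sigma(s)\sigma(t)|s-t|^{2H-2}\,ds\,dt$, which I would split — exactly as in the proof of Lemma \ref{lem3} — into the diagonal cell contribution over $[k/N,(k+1)/N]^2$ plus twice the cross contribution over $[0,k/N]\times[k/N,(k+1)/N]$.

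Next I would bound both pieces using $0<\sigma(\cdot)\le\Sigma$. For the diagonal cell, $\int_{k/N}^{(k+1)/N}\int_{k/N}^{(k+1)/N}|s-t|^{2H-2}\,ds\,dt=\frac{N^{-2H}}{H(2H-1)}$, a contribution of size $O(N^{-2H})$ uniform in $k$. For the cross term, integrating first in $t$ and then in $s$ gives $\frac{N^{-2H}}{2H(2H-1)}[(k+1)^{2H}-k^{2H}-1]$, and since $2H\in(1,2)$ the convexity of $x\mapsto x^{2H}$ yields $(k+1)^{2H}-k^{2H}\le 2H(k+1)^{2H-1}$. Combining the two, $|\delta_k|\le C_H\Sigma^2 N^{-2H}(k+1)^{2H-1}$ with $C_H$ depending only on $H$, which is (up to the explicit constant) the bound recorded in Lemma \ref{lem3}.

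Finally, plugging this in gives $N^{2H-1/2}\sum_{k=0}^{N-1}\delta_k^2\le C_H^2\Sigma^4\,N^{2H-1/2}N^{-4H}\sum_{k=1}^{N}k^{4H-2}$. For $H\in(1/2,3/4)$ we have $4H-2\in(0,1)$, so $k^{4H-2}\le N^{4H-2}$ for each $k$ and $\sum_{k=1}^{N}k^{4H-2}\le N^{4H-1}$ (equivalently $\sim N^{4H-1}/(4H-1)$ by comparison with $\int_0^N x^{4H-2}\,dx$). Collecting exponents, $N^{2H-1/2}N^{-4H}N^{4H-1}=N^{2H-3/2}$, and $H<3/4$ makes the exponent negative, so the whole quantity is $O(N^{2H-3/2})\to 0$. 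I expect the only mildly delicate point to be handling the singular kernel $|s-t|^{2H-2}$ on the diagonal inside the cell $[k/N,(k+1)/N]^2$ — one must note the integral is finite because $2H-2>-1$ and that it scales like $N^{-2H}$ uniformly in $k$; the telescoping split of $\delta_k$, the boundedness of $\sigma$, and the power-sum estimate are all routine, which is why this lemma is substantially lighter than Lemmas \ref{lem1}--\ref{lem3}, and in fact borrows its $|\delta_k|$ bound directly from Lemma \ref{lem3}.
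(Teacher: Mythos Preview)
Your proposal is correct and follows essentially the same route as the paper: you reuse the bound $|\delta_k|\le C\,\Sigma^2 N^{-2H}(k+1)^{2H-1}$ from Lemma~\ref{lem3}, square and sum, estimate $\sum_{k=1}^N k^{4H-2}\le N^{4H-1}$, and obtain the final order $N^{2H-3/2}\to 0$ for $H<3/4$. The only difference is presentational --- you spell out the kernel derivation of the $\delta_k$ bound rather than merely citing it --- but the argument is the paper's.
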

\subsection{Simulation studies (time varying volatility)}\label{sec:sim2}
In this section we did simulation studies to see the difference between our actual parameter of interest $\int_{\mathbb{R}}(M(\sigma(s)\chi_{[0,1]}))^2ds$ and what we achieve $\tilde{\theta}_N$ for sample size $N$, for different Hurst parameter with different $\sigma(s)$ function. We have chosen different functions $\sigma(t)$, necessarily bounded, on the interval $[0,1]$, calculate $\tilde{\theta}_N$ and report the results. We take $N=1000$ and compute $\tilde{\theta}_N$ and $\theta$ and note the difference.

Let us consider $\sigma(t)$ a sub linear function of the form $\sigma(t)=\sigma t^{\alpha},\ t\in (0,1)$, $\frac{1}{2}<H<\frac{3}{4}$, $\sigma>0,\ 0<\alpha<1$.
\begin{table}
\caption{The comparison of $\tilde{\theta}_N$ and $\theta$}
\begin{center}
\begin{tabular}{|c|c|c|c|} \hline
  \multicolumn{4}{|c|}{$\sigma$=0.4, $\alpha=0.3$} \\  \hline
     H & 0.55   & 0.65   & 0.74  \\
   $\tilde{\theta}_N$ & 0.1000039  &  0.1000005  & 0.09992656  \\
   $\theta$ & 0.09962605 & 0.09868432  &   0.09770835 \\  \hline
   \multicolumn{4}{|c|}{$\sigma$=6.4, $\alpha=0.3$} \\  \hline
     H & 0.55   & 0.65   & 0.74  \\
   $\tilde{\theta}_N$ &  25.60027  & 25.60061   & 25.6002  \\
   $\theta$ & 25.50445	& 25.2632  &   25.01328 	 \\  \hline
\end{tabular}
\end{center}
\end{table}
Next we consider functions of the form $\sigma(t)=\sigma(t^{\alpha}+t^{\beta})$ for $t\in(0,1)$, $0<\alpha<1$ and $\beta>1$, i.e. polynomial with positive fraction and integer powers. We note that $\tilde{\theta}_N$ may not converge as $N\rightarrow\infty$ and it will indeed not converges. So the simulation result is only for $N=1000$.
\begin{table}
\caption{The comparison of $\tilde{\theta}_N$ and $\theta$}
\begin{center}
\begin{tabular}{|c|c|c|c|} \hline
  \multicolumn{4}{|c|}{$\sigma$=0.4, $\alpha=0.8,\beta=2$} \\  \hline
     H & 0.55   & 0.65   & 0.74  \\
   $\tilde{\theta}_N$ & 0.1777476  & 0.1777324  & 0.1776835  \\
   $\theta$ & 0.1705813  & 0.1579971   &  0.1482963  \\  \hline
   \multicolumn{4}{|c|}{$\sigma$=6.4, $\alpha=0.8,\beta=2$} \\  \hline
     H & 0.55   & 0.65   & 0.74  \\
   $\tilde{\theta}_N$ & 45.50396  & 45.50438  &  45.5041 \\
   $\theta$ & 43.66903	& 40.44747  &  37.96394 \\  \hline
\end{tabular}
\end{center}
\end{table}

For practical purpose the sub linear functions seems best.
\section{\bf Conclusions}\label{sec6}
In this paper we sketch the way to obtain the option price for fBm driven model with time varying volatility. We identify the parameter of interest for calculating option price. Next we have proposed estimator from high frequency data for parameter similar to so called ''integrated volatility", in case of constant volatility and time varying volatility model driven by fBm. We have shown that estimators are asymptotically normally distributed for $H<\frac{3}{4}$. For time varying volatility model, the estimator will not asymptotically unbiased for our parameter of interest. Through some simulation study we showed how close of the parameter of interest can be achieved by the estimators under consideration. 
\subsection{Future directions}
\begin{enumerate}
\item In all these we assume $H$ as a known quantity. The estimation for $H$ also exists separately. See
Prakasa Rao (2010), Breton et al. (2009). Is there any way to
combine?

\item Following Zhang et al (2005) we want to develop method of inferring volatility when the process is observed discretely with noise.

\item Following Barndorff-Nielsen and Shephard (2004) we want to consider estimators for jump process.
\end{enumerate}
\subsection{Comments}
\begin{itemize}
\item Why consider fBM driven models? Non-stationary time series will also take care of thick-tails and long-range dependence in returns. But it is not easy to put them in an option pricing framework.
    \item Why do we have confidence intervals for option prices, when looked at as solution of an optimization problem? There is uncertainty in utility/preferences.
\end{itemize} 
\section{Acknowledgement}
First author wants to acknowledge Department of Science and Technology, India, for financial support to conduct this research work.
\section{Appendix}
\subsection{Wiener integral \label{WI}}

$\Omega:=\mathcal{S}'(\mathbb{R})$, dual of the space of Schwartz class functions $\mathcal{S}$, is tempered distributions with sigma algebra $\mathcal{F}$. $ \langle\omega,f\rangle$ is the random variable by action of $\omega \in \Omega$ on $f\in \mathcal{S}(\mathbb{R})$. Bochnor Minlos probability measure $P$ on $(\Omega,\mathcal{F})$ is such that $\disp E[\exp\  i \langle\omega,f\rangle]=e^{-\frac{1}{2}\|f\|^2},\ \|f\|^2=\int_{\mathbb{R}}f^2(x)dx$. We also have expectation $E[\langle\omega,f\rangle]=0$ and variance $ E[\langle\omega,f\rangle]^2=\|f\|^2$ under $P$. Let $I(0,t)$ is indicator function. Then $\tilde B_t(\omega)= \langle\omega,I(0,t)\rangle \in L^2(P)$ and $\tilde B_t$ is Gaussian random variable for each $t$.  Using Kolmogorov's continuity theorem $\tilde B_t$ has continuous version as $B_t$ and $B_t$ is standard Brownian motion. This duality can be extended for $f\in L^2(\mathbb{R})$ approximating $f$ by step functions we get $\disp \langle\omega,f\rangle=\int_{\mathbb{R}}f(t) dB_t(\omega)$. 

Let $M(0,t)=M I(0,t)\in L^2(\mathbb{R})$. Then $\tilde B_t^H (\omega)=\langle\omega,M(0,t)\rangle$. $\tilde B_t^H$ is Gaussian random variable with $E\tilde B_t^H=0$ and $\disp E\tilde B_t^H\tilde B_s^H=\int_{\mathbb{R}}M(0,t)(x)M(0,s)(x)dx=\frac{1}{2}(t^{2H}+s^{2H}-|t-s|^{2H})$. Again take continuous version of $\tilde B_t^H$ as $B_t^H$. So we get $\disp B^H_t=\int_{\mathbb{R}}(MI(0,t)(s)dB_s$.

Let $f\in L^2_H(\mathbb{R})$, deterministic. Then $Mf \in L^2(\mathbb{R})$. 
The Wiener integral with respect to fractional Brownian motion is defined as
\begin{equation}
\int_{\mathbb{R}}f(s)dB_s^H =\int_{\mathbb{R}}(Mf)(s)dB_s
\end{equation}

Take $f\in \mathcal{S}$. Construct Bochnor Minlos probability measure $P^H$ on $(\Omega,\mathcal{F})$ is such that $\disp E[\exp\  i \langle\omega,f\rangle_H]=e^{-\frac{1}{2}\|f\|_H^2},\ \|f\|_H^2= \|f\|_{L^2_H}=\|Mf\|_{L^2}=\int_{\mathbb{R}}(Mf)^2(x)dx$. We also have expectation $E^H[\langle\omega,f\rangle_H]=0$ and variance $ E^H[\langle\omega,f\rangle_H]^2=\|f\|_H^2$ under $P^H$. Let $I(0,t)$ is indicator function. Then $\tilde B^H_t(\omega)= \langle\omega,I(0,t)\rangle_H \in L^2(P^H)$ and $\tilde B^H_t$ is Gaussian random variable for each $t$.  Using Kolmogorov's continuity theorem $\tilde B^H_t$ has continuous version as $B^H_t$ and $B^H_t$ is fractional Brownian motion. This duality can be extended for $f\in L^2_H(\mathbb{R})$ 
we get $\disp \langle\omega,f\rangle_H=\int_{\mathbb{R}}f(t) dB^H_t(\omega)$. 
So, we note that the term $\disp \int^t_0\sigma_sdB^H_s$ appears in solution of the SDE (\ref{eqn4}) is a Wiener integral with respect to Brownian motion. The first two moments of Wiener integral with respect to fractional Brownian motion are as follows 
$$E(\int_{\mathbb{R}}f(s)dB_s^H)=0$$ and $$ E[\int_{\mathbb{R}}f(s)dB_s^H]^2=\|f\|_{L^2_H}=\|Mf\|_{L^2}$$ $$E[\int_{\mathbb{R}}f(s)dB_s^H \int_{\mathbb{R}}g(s)dB_s^H]=\langle f,g\rangle_{L^2_H}$$

\subsection{Wick product and related topics \label{WP}}
This section consists of background material required for section \ref{sec4}.

{\bf Chaos expansion Theorem} Let $F\in L^2(P)$. Then there exists a unique family ${c_{\alpha}}, \alpha \in \mathcal{I}$ of constants, $c_{\alpha} \in \mathbb{R}$ such that $F(\omega) = \disp\sum_{\alpha \in \mathcal{I}} c_{\alpha} H_{\alpha}(\omega)$, $H_{\alpha}$ is multi indexed Hermite polynomial of Bm (convergence in $L^2(P)$). Moreover, we have the isometry $E(F^2)=\disp\sum_{\alpha \in \mathcal{I}} c_{\alpha}^2 \alpha !  $. 

Let us define $(\mathcal{S})$ as {\bf space of stochastic test functions} and $(\mathcal{S}^*)$ as {\bf space of stochastic distributions}. 

$(\mathcal{S})$ is collection of all $F\in L^2(P)$ such that it's expansion is $F(\omega)=\disp\sum_{\alpha \in \mathcal{I}}c_{\alpha} H_{\alpha}(\omega)$ where $\|F\|_k^2=\disp \sum_{\alpha}\alpha!c_{\alpha}^2(2\mathbb{N})^{k \alpha}<\infty$ for all integer $k=1,2,\cdots$ with $\disp (2\mathbb{N})^{k \gamma}=\prod_{j=1}^m (2j)^{\gamma_j}, \ \gamma=(\gamma_1,\cdots, \gamma_m)\in \mathcal{I}$.

$(\mathcal{S}^*)$ is collection of all $G\in L^2(P)$ such that it's expansion is $G(\omega)=\disp\sum_{\beta \in \mathcal{I}}c_{\beta} H_{\beta}(\omega)$ where $\|G\|_q^2=\disp \sum_{\beta}\beta!c_{\beta}^2(2\mathbb{N})^{-q  \beta}<\infty$ for some integer $q<\infty$. 

$(\mathcal{S}^*)$ is dual of $(\mathcal{S})$ with duality relation as follows: 

If $F(\omega) = \disp\sum_{\alpha \in \mathcal{I}} a_{\alpha} H_{\alpha}(\omega)\in (\mathcal{S})$ and $G(\omega) = \disp\sum_{\alpha \in \mathcal{I}} b_{\alpha} H_{\alpha}(\omega) \in (\mathcal{S}^*)$ then action of $G$ on $F$ is $\langle G,F\rangle_{(\mathcal{S}^*),(\mathcal{S})}=\disp \sum_{\alpha \in \mathcal{I}}\alpha !a_{\alpha} b_{\alpha}$.

If $L^2(P)\subset (\mathcal{S}^*) $ and $(\mathcal{S}) \subset L^2(P)$ then action of $G$ on $F$ is $\langle G,F\rangle_{(\mathcal{S}^*),(\mathcal{S})}=\langle G,F\rangle_{L^2(P)}=E(GF)$.

If $F(\omega) = \disp\sum_{\alpha \in \mathcal{I}} a_{\alpha} H_{\alpha}(\omega)\in (\mathcal{S}^*)$ and $G(\omega) = \disp\sum_{\beta \in \mathcal{I}} b_{\beta} H_{\beta}(\omega) \in (\mathcal{S}^*)$ then {\bf Wick product} $\diamond$ is defined as $(F\diamond G)(\omega)=\disp\sum_{\alpha,\beta \in \mathcal{I}} a_{\alpha}b_{\beta} H_{\alpha+\beta}(\omega)\in (\mathcal{S}^*)$.

{\bf Fractional white noise}  $\disp W^H_t=\frac{dB^H_t}{dt} $ is an element of $(\mathcal{S}^*) $ see Elliott and Van der Hoek (2003), Biagini {\it et al.} (2004) for detail.

{\bf Wick Ito Skorohod integral with respect to $B^H_t$: }
If $Y:\mathbb{R}\rightarrow (\mathcal{S}^*)$ is such that $Y_t\diamond W^H_t$ is integrable in $(\mathcal{S}^*)$ then we define 
\begin{equation}
\int_{\mathbb{R}}Y_t dB^H_t=\int_{\mathbb{R}}Y_t\diamond W^H_t dt.
\end{equation}

If $f\in L^2_H(\mathbb{R})$ then $$\int_{\mathbb{R}}f(s)dB^H_s= \int_{\mathbb{R}}f(s)\diamond W^H_s ds=\int_{\mathbb{R}}(Mf)(s)dB_s. $$
\subsection{Background to deal with time varying volatility estimator \label{tv}}
In this section we introduce some notations and established results which will be needed for our future calculation.
Our fractional Brownian motion $B_t^H$ is centered, continuous, mean zero Gaussian processes with covariance functions as  $R_B^H=cov(B_s^H, B_t^H)=\frac{1}{2}[t^{2H}+s^{2H}-|t-s|^{2H}]$.  Let $\mathcal{E}$ be the set of real valued step functions.
For $\phi=I[0,t], \psi=I[0,s] \in \mathcal{E}$ let us define inner product $\langle\phi,\psi\rangle_{\mathcal{E}}=\langle I_{[0,s]},I_{[0,t]}\rangle_{\mathcal{E}}=R_B^H$. For $\phi=\sum_j a_jI[0,t_j]$, set $ B^H(\phi)=\sum_j a_jB^H_{t_j}$. Let $\psi=\sum_j b_jI[0,t_j]$. So, $E(B^H(\phi)B^H(\psi))=\langle\phi,\psi\rangle_{\mathcal{E}}$. Next for $\phi\in \mathcal{H}$, there are $\phi_n \in \mathcal{E}$ such that $\phi_n\rightarrow \phi$ in $\mathcal{H}$ then $B^H(\phi) $ is the $L^2$ limit of  $B^H(\phi_n) $. So we get $\langle\phi,\psi\rangle_{\mathcal{H}}=EB^H(\phi)B^H(\psi)$. $\{B^H(\phi), \phi \in \mathcal{H}\}$ is called isonormal Gaussian process.

Let $H_n$ be $n$ th Hermite polynomial satisfying \begin{equation}
 \disp \frac{d}{dx}H_n(x)=H_{n-1}(x),\ n\geq 1.
\end{equation} Take $\phi \in \mathcal{H}$ such that $\|\phi\|_{\mathcal{H}}=1$. Consider random variables $H_n(B^H(\phi))$ and take the closure of the span of these random variables. This is the $n$ th order Wiener chaos $\mathcal{W}_n$.

$I_n$, the multiple stochastic (Wiener Ito) integral with respect to isonormal Gaussian process $B^H$, is a map from $\mathcal{H}^{\odot n}$ to $\mathcal{W}_n$, $\mathcal{H}^{\odot n}$ being symmetric tensor product of $\mathcal{H}$.  $ \mathcal{H}^{\odot n}$ has norm $ \disp \frac{1}{\sqrt{n !}}\|.\|_{\mathcal{H}^{\otimes n}}$, $\mathcal{H}^{\otimes n}$ is tensor product of $\mathcal{H}$.

Then for $f\in \mathcal{H}^{\odot n}$,  we also have $I_n(f)=I_n(\tilde f)$,  $\tilde{f}$ is symmetrization of $f$.

For $\phi \in \mathcal{H},\ \disp I_n(\phi^{\otimes n})=n! H_n(I_1(\phi))= n! H_n(B^H(\phi))$ is linear isometry between $\mathcal{H}^{\odot n} $ and $\mathcal{W}_n$.

Now for $f\in \mathcal{H}^{\odot n}$ and $g\in \mathcal{H}^{\odot m}$ we have followings:
\begin{eqnarray}
 E(I_n(f)I_m(g))&=&n!\langle\tilde{f},\tilde{g}\rangle _{\mathcal{H}^{\otimes n}}\ \mbox{ if } \ m=n\\
 E(I_n(f)I_m(g))&=&0\ \mbox{ if }\ m \neq n
\end{eqnarray}

Let $\{e_i, i\geq 1\}$ be an orthonormal basis of $\mathcal{H}$, $m,n \geq 1, \ r=0,\cdots, n\wedge m $. $f\otimes_r g \in \mathcal{H}^{\otimes (m+n-2r)}$ is contraction is defined as \begin{equation}
f\otimes_r g=\sum_{i1,\cdots,ir=1}^{\infty}\langle f, e_{i1}\otimes\cdots\otimes e_{ir}\rangle_{\mathcal{H}^{\otimes r}}\langle g, e_{i1}\otimes\cdots\otimes e_{ir}\rangle_{\mathcal{H}^{\otimes r}}. \label{cc}
\end{equation} This definition does not depend on the choice of orthonormal basis and $\langle f, e_{i1}\otimes\cdots\otimes e_{ir}\rangle_{\mathcal{H}^{\otimes r}}\in \mathcal{H}^{\odot (n-r)}$, $\langle g, e_{i1}\otimes\cdots\otimes e_{ir}\rangle_{\mathcal{H}^{\otimes r}}\in \mathcal{H}^{\odot (m-r)}$. $f\otimes_r g $ is not necessarily symmetric. Let $f\tilde \otimes_r g$ is symmetrization of $f\otimes_r g$. Then 
\begin{equation}\disp I_n(f)I_m(g)= \sum_{r=0}^{m \wedge n} r!  \Bigl(\begin{matrix} n\\ r \end{matrix} \Bigr) \Bigl(\begin{matrix} m\\ r \end{matrix} \Bigr) I_{n+m-2r}(f\tilde\otimes_r g ).\label{tp}
\end{equation}
Also for $n=m=r$ we have \begin{equation}
I_0(f\otimes_r g)=\langle f,g\rangle_{\mathcal{H}^{\otimes r}}.
\end{equation}

Let $F$ be a functional of the isonormal Gaussian process $B^H$ such that $E(F(B^H)^2)<\infty$ then there is unique sequence $f_n\in \mathcal{H}^{\odot n}$ and $F$ can be written as sum of multiple stochastic integrals as $ \disp F=\sum_{n\ge 0}I_n(f_n)$ with  and $I_0(f_0)=E(F)$ where the series converges in $L^2$

For $\phi_1, \cdots, \phi_n \in \mathcal{H}$, let $F=g(B^H(\phi_1),\cdots,B^H(\phi_n))$ with $g$ smooth compactly supported. Then Malliavin derivative $D$ is $\mathcal{H}$ valued random variable
defined as follows:
\begin{equation}
DF=\sum_{i=1}^n\frac{\partial g}{\partial x_i}(B^H(\phi_1),\cdots,B^H(\phi_n))\phi_i.
\end{equation}
If $\mathcal{H}$ is $ L^2(\mathbb{R})$ for some non atomic measure then $ DF$ can be identified as follows: $DF=(D_tF)_{t\in \mathbb{R}}$
\begin{equation}
D_tF=\sum_{i=1}^n\frac{\partial g}{\partial x_i}(B^H(\phi_1),\cdots,B^H(\phi_n))\phi_i(t), \ t \in \mathbb{R}
\end{equation}
If $F=I_n(f),f \in \mathcal{H}^{\odot n}$, 
for every $t \in \mathbb{R}$, then \begin{equation}D_{t}F= D_{t}I_n(f)=n I_{n-1} f(.,t). \label{dd}
\end{equation} 
$I_{n-1}(f(.,t))$ means $n-1$ multiple stochastic integral is taken with respect to first $n-1$ variables $t_1,\cdots, t_{n-1}$ of $f(t_1,\cdots, t_{n-1},t), \ t$ is kept fixed.
For Malliavin calculus details, see Nualart (1995), Nourdin (2012).
To prove asymptotic normality we will use the following two theorems [5.1] and [5.2] taken from Tudor C.A. (2008):

\begin{theorem}
Let $I_n(f) $ be a multiple integral of order $n\ge 1 $ with respect to an isonormal process $M$. Then
$$ d(\mathcal{L}(I_n(f)),\mathcal{N}(0,1))\le c_n [E(|DI_n(f)|_{\mathcal{H}}^2-n)^2]^{\frac{1}{2}}$$ where $D$ is the Malliavin derivative with respect to $B^H$ and $\mathcal{H}$ is the canonical Hilbert space associated to $B^H$. Here $d$ can be any of the distances like Kolmogorov Smirnov distance, or total variation distance etc. and depending upon $d$ and the order $n$ one will end up a constant $c_n$.
$\mathcal{L}(B^H)$ stands for law of $B^H$. \label{md}
\end{theorem}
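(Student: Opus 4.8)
The plan is to prove this via the Malliavin--Stein method of Nourdin and Peccati: one converts Gaussian approximation into the estimation of a single Malliavin-type quantity, and the multiple-integral structure of $I_n(f)$ enters only through one elementary identity. First I would recall Stein's characterization: $Z\sim\mathcal{N}(0,1)$ exactly when $E[g'(Z)-Zg(Z)]=0$ for every $g$ in a sufficiently large class, and the associated estimate that, for each of the distances $d$ listed (Kolmogorov, total variation, and the like),
$$d(\mathcal{L}(F),\mathcal{N}(0,1))\ \le\ \sup_{g\in\mathcal{F}_d}\big|E[g'(F)-Fg(F)]\big|,$$
where $\mathcal{F}_d$ is the family of solutions $g=g_h$ of the Stein equation $g'(x)-xg(x)=h(x)-E[h(Z)]$ as $h$ runs over the test functions defining $d$. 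Classical bounds for the Stein equation give $\sup_{g\in\mathcal{F}_d}\|g'\|_\infty<\infty$, and this uniform bound is what gets absorbed into $c_n$. So the whole problem reduces to controlling $E[g'(F)-Fg(F)]$ for $F=I_n(f)$.

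The second ingredient is the Malliavin integration-by-parts formula: for a centered $F\in\mathbb{D}^{1,2}$ and a $C^1$ function $g$ with bounded derivative,
$$E[Fg(F)]\ =\ E\big[g'(F)\,\langle DF,\,-DL^{-1}F\rangle_{\mathcal{H}}\big],$$
where $L$ is the generator of the Ornstein--Uhlenbeck semigroup and $L^{-1}$ its pseudo-inverse. The key simplification for a multiple integral of order $n$ is $LI_n(f)=-n\,I_n(f)$, hence $-L^{-1}I_n(f)=\tfrac{1}{n}I_n(f)$ and therefore $-DL^{-1}F=\tfrac{1}{n}DF$. Substituting gives $E[Fg(F)]=\tfrac{1}{n}E\big[g'(F)\,\|DF\|_{\mathcal{H}}^2\big]$. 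This is the only step that uses $F$ being a single chaos, and it is immediate from the chaos decomposition, so I would state it as a lemma and move on.

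Now combine: $E[g'(F)-Fg(F)]=E\big[g'(F)\big(1-\tfrac{1}{n}\|DF\|_{\mathcal{H}}^2\big)\big]$, so
$$\big|E[g'(F)-Fg(F)]\big|\ \le\ \|g'\|_\infty\, E\Big|1-\tfrac{1}{n}\|DF\|_{\mathcal{H}}^2\Big|\ \le\ \frac{\|g'\|_\infty}{n}\Big(E\big[(\,n-\|DF\|_{\mathcal{H}}^2)^2\big]\Big)^{1/2},$$
the last step by Jensen's inequality. Taking the supremum over $g\in\mathcal{F}_d$ and setting $c_n=\tfrac{1}{n}\sup_{g\in\mathcal{F}_d}\|g'\|_\infty$ yields
$$d(\mathcal{L}(I_n(f)),\mathcal{N}(0,1))\ \le\ c_n\Big(E\big[(\|DI_n(f)\|_{\mathcal{H}}^2-n)^2\big]\Big)^{1/2},$$
which is the assertion. (Here one reads the statement under the normalization $E[I_n(f)^2]=n!\,\|\tilde f\|_{\mathcal{H}^{\otimes n}}^2=1$, consistent with $E[\|DI_n(f)\|_{\mathcal{H}}^2]=n\,E[I_n(f)^2]$, so the right-hand side is genuinely the deviation of $\|DI_n(f)\|_{\mathcal{H}}^2$ about its mean.)

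The delicate part is not the algebra above but its justification. One must check $I_n(f)\in\mathbb{D}^{1,2}$, which is immediate since a single chaos is smooth in the Malliavin sense, and, more seriously, that the integration-by-parts identity remains valid for the Stein solutions $g$, which are only guaranteed to be Lipschitz with an absolutely continuous derivative rather than $C^1$; this is handled by a truncation-and-mollification argument using the $L^2$ bound on $DF$. One also needs the classical regularity estimates for the Stein equation to secure $\sup_{g\in\mathcal{F}_d}\|g'\|_\infty<\infty$ for each listed distance. I expect the approximation step extending the Malliavin integration by parts to the non-smooth Stein test functions to be the main technical obstacle; everything else is either standard Stein theory or a direct consequence of the chaos representation of $I_n(f)$.
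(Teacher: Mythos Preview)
Your argument is the standard Nourdin--Peccati Malliavin--Stein proof, and it is correct. Note, however, that the paper does not actually supply a proof of this theorem: it is quoted in the Appendix as a background result ``taken from Tudor C.A.\ (2008)'' and used as a black box in the proof of Theorem~\ref{thm2}. The approach you outline --- Stein's equation, the integration-by-parts $E[Fg(F)]=E[g'(F)\langle DF,-DL^{-1}F\rangle_{\mathcal H}]$, and the chaos identity $-L^{-1}I_n(f)=\tfrac1n I_n(f)$ --- is precisely the argument underlying the cited reference, so there is nothing to compare: your sketch \emph{is} the proof the paper defers to the literature.
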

\begin{theorem}
Fix $n\ge 2$ and let $(F_k,k >1), \ F_k=I_n(f_k)$ (with $f_k \in \mathcal{H}^{\odot n}$, for every $k\ge 1$) be a sequence of square integrable random variables in the $n$th Wiener chaos of an isonormal process $B^H$ such that $E[F_k^2]^2\rightarrow 1$ as $k\rightarrow \infty$. Then the following are equivalent:
(i) The sequence $(F_k)_{k\ge 0}$ converges in distribution to the normal law $\mathcal{N}(0,1)$.
(ii) One has $E[F_k^4]\rightarrow 3$ as $k\rightarrow \infty$.
(iii) For all $1\le l \le n-1$ it holds that $\disp\lim_{k\rightarrow \infty}|f_k\otimes_l f_k|_{\mathcal{H}^{\otimes 2 (n-l)}}=0$.
(iv) $ |DF_k|^2_{\mathcal{H}}\rightarrow n \ \mbox{in}\ L^2 \ \mbox{as}\ k\rightarrow \infty$, where $D$ is the Malliavin derivative with respect to $B^H$.\label{fm}
\end{theorem}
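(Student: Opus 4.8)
The plan is to establish the cycle (i) $\Rightarrow$ (ii) $\Rightarrow$ (iii) $\Rightarrow$ (iv) $\Rightarrow$ (i); the order is chosen so that the only contraction inequalities needed are the trivial $\|\tilde g\|_{\mathcal H^{\otimes m}}\le\|g\|_{\mathcal H^{\otimes m}}$, and one never has to argue that vanishing of the symmetrized contractions forces vanishing of the unsymmetrized ones. Put $\sigma_k^2:=E[F_k^2]=n!\,\|f_k\|^2_{\mathcal H^{\otimes n}}$, so $\sigma_k^2\to1$; replacing $F_k$ by $F_k/\sigma_k=I_n(f_k/\sigma_k)$ alters none of (i)--(iv) (Slutsky together with $\sigma_k\to1$ for (i); homogeneity of the contraction norms for (ii)--(iv)), so we may assume $\sigma_k^2\equiv1$. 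The two tools throughout are the multiplication formula (\ref{tp}) and the isometry $E[I_m(h)^2]=m!\,\|\tilde h\|^2_{\mathcal H^{\otimes m}}$. For (i) $\Rightarrow$ (ii) one uses hypercontractivity on a fixed Wiener chaos, $E[|F_k|^p]\le(p-1)^{np/2}(E[F_k^2])^{p/2}$ for every $p\ge2$: taking $p=8$ gives $\sup_k E[F_k^8]<\infty$, so $\{F_k^4\}_k$ is uniformly integrable, and since $x\mapsto x^4$ is continuous, (i) upgrades to $E[F_k^4]\to E[Z^4]=3$ with $Z\sim\mathcal N(0,1)$.

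For (ii) $\Rightarrow$ (iii), square $F_k^2=\sum_{r=0}^n r!\binom nr^2 I_{2n-2r}(f_k\tilde\otimes_r f_k)$ and use the isometry and orthogonality of distinct chaoses to get
$$E[F_k^4]=\sum_{r=0}^n (r!)^2\binom nr^4 (2n-2r)!\,\|f_k\tilde\otimes_r f_k\|^2_{\mathcal H^{\otimes(2n-2r)}}.$$
Re-expressing the $r=0$ summand $(2n)!\,\|f_k\tilde\otimes f_k\|^2$ through the contraction norms $\|f_k\otimes_r f_k\|$ and recognising the $r=n$ summand as $\sigma_k^4$ — the classical Nualart--Peccati rearrangement — one arrives at $E[F_k^4]-3\sigma_k^4=\sum_{r=1}^{n-1}\beta_r\,\|f_k\otimes_r f_k\|^2+(\text{further nonnegative terms})$ with each $\beta_r>0$. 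Hence $E[F_k^4]\to3$ forces $\|f_k\otimes_r f_k\|_{\mathcal H^{\otimes(2n-2r)}}\to0$ for every $r=1,\dots,n-1$.

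For (iii) $\Rightarrow$ (iv), use (\ref{dd}): $D_tF_k=nI_{n-1}(f_k(\cdot,t))$, so $\|DF_k\|^2_{\mathcal H}=n^2\int_{\mathbb R}I_{n-1}(f_k(\cdot,t))^2\,dt$; applying (\ref{tp}) inside the integral and contracting the free variable $t$ yields $\|DF_k\|^2_{\mathcal H}=n^2\sum_{s=1}^{n}(s-1)!\binom{n-1}{s-1}^2 I_{2n-2s}(f_k\tilde\otimes_s f_k)$, whence $E[\|DF_k\|^2_{\mathcal H}]=n\sigma_k^2=n$ and
$$\mathrm{Var}\big(\|DF_k\|^2_{\mathcal H}\big)=n^4\sum_{s=1}^{n-1}((s-1)!)^2\binom{n-1}{s-1}^4(2n-2s)!\,\|f_k\tilde\otimes_s f_k\|^2\le C_n\sum_{s=1}^{n-1}\|f_k\otimes_s f_k\|^2,$$
using $\|\tilde g\|\le\|g\|$. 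By (iii) the right-hand side tends to $0$, so $\|DF_k\|^2_{\mathcal H}\to n$ in $L^2$. Finally (iv) $\Rightarrow$ (i) follows at once from the Stein-type bound of Theorem \ref{md}: as $E[F_k^2]=1$, $d(\mathcal L(F_k),\mathcal N(0,1))\le c_n[E(\|DF_k\|^2_{\mathcal H}-n)^2]^{1/2}\to0$, hence $F_k\Rightarrow\mathcal N(0,1)$.

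The one genuinely delicate step is the bookkeeping inside (ii) $\Rightarrow$ (iii): expanding $E[F_k^4]$ by the product formula and, after absorbing the $r=0$ and $r=n$ pieces, checking that $E[F_k^4]-3\sigma_k^4$ is a \emph{nonnegative} combination in which each $\|f_k\otimes_r f_k\|^2$, $1\le r\le n-1$, carries a strictly positive coefficient. This rests on the combinatorial identity expressing $\|f\tilde\otimes f\|^2$ through the $\|f\otimes_r f\|^2$ and is the classical core of the fourth-moment theorem; everything downstream — hypercontractivity, the Malliavin-derivative expansion, the variance bound via $\|\tilde g\|\le\|g\|$, the appeal to Theorem \ref{md}, and the reduction to unit variance — is routine.
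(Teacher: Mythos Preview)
The paper does not prove this theorem: it is quoted verbatim as a background result from Tudor (2008) (together with Theorem~\ref{md}) and used as a black box in the proof of Theorem~\ref{thm2}. There is therefore no ``paper's own proof'' to compare against.

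Your sketch is the standard Nualart--Peccati/Nourdin--Peccati argument and the cycle (i)$\Rightarrow$(ii)$\Rightarrow$(iii)$\Rightarrow$(iv)$\Rightarrow$(i) is correctly organised. The implications (i)$\Rightarrow$(ii) via hypercontractivity and uniform integrability, (iii)$\Rightarrow$(iv) via the chaos expansion of $\|DF_k\|_{\mathcal H}^2$ together with $\|\tilde g\|\le\|g\|$, and (iv)$\Rightarrow$(i) via Theorem~\ref{md} are all fine. The one place where you have not actually given a proof is (ii)$\Rightarrow$(iii): you assert the rearrangement $E[F_k^4]-3\sigma_k^4=\sum_{r=1}^{n-1}\beta_r\|f_k\otimes_r f_k\|^2+(\text{nonnegative})$ with $\beta_r>0$ but do not derive it. This is exactly the combinatorial heart of the fourth-moment theorem (it requires the identity $\|f\otimes_r f\|_{\mathcal H^{\otimes 2(n-r)}}=\|f\otimes_{n-r} f\|_{\mathcal H^{\otimes 2r}}$ for symmetric $f$, used to pair the $r=0$ piece $(2n)!\|f\tilde\otimes f\|^2$ against $2\sigma^4$ and leave only middle contractions), and without it the step is a citation rather than an argument. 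A minor point: writing $\|DF_k\|^2_{\mathcal H}=n^2\int_{\mathbb R}I_{n-1}(f_k(\cdot,t))^2\,dt$ presumes $\mathcal H$ is realised as an $L^2$-type space with pointwise evaluation; in the fBm setting of this paper that is legitimate, but for a general isonormal process one should phrase this via the $\mathcal H$-inner product directly.
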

The above theorems we will use to prove asymptotic normality for our proposed estimator.

\end{document}